\documentclass[10pt]{article}

\usepackage{amsbsy,amssymb,amsmath,amsthm}
\usepackage{caption,color,graphicx}
\usepackage[a4paper,text={6.5in,10in},centering]{geometry}
\usepackage[colorlinks=true,hypertexnames=false,linkcolor=blue,citecolor=blue]{hyperref}
\usepackage[numbers,comma,square,sort&compress]{natbib}
\usepackage{authblk}

%			layout

\setlength{\parskip}{1.0ex plus0.2ex minus0.2ex}
\setlength{\parindent}{0.0in}

\setcaptionmargin{0.25in}

\setlength{\unitlength}{1in}

%			environments

\newtheorem{Lemma}{Lemma}[section]

\newtheorem{Definition}{Definition}[section]

\newtheorem{Theorem}{Theorem}[section]

\title{Circle diffeomorphisms forced by expanding circle maps}

\author[1,2]{Ale Jan Homburg} % \altaffilmark{1}}
% \altaffiltext{1}
\affil[1]{KdV Institute for Mathematics, University of Amsterdam, Science park 904, 1098 XH Amsterdam, Netherlands}
\affil[2]{Department of Mathematics, VU University Amsterdam, De Boelelaan 1081, 1081 HV Amsterdam, Netherlands}

\begin{document}
\maketitle

\begin{abstract}
We discuss dynamics of skew product maps defined by circle diffeomorphisms forced by expanding circle maps.
We construct an open class of such systems that are robust topologically mixing and for which almost all points in the same fiber converge
under iteration. This property follows from the construction of 
an invariant attracting  graph in the natural extension, a skew product of
circle diffeomorphisms forced by a solenoid homeomorphism.\\ \\
% This invariant attracting  graph is measurable and lies dense in the product space of solenoid and circle.
MSC 37C05, 37D30, 37C70, 37E10
\end{abstract}

\section{Introduction}\label{s:i}

We will treat the dynamics of a class of circle diffeomorphisms that are forced by expanding circle maps.
We start with a numerical experiment on the skew product map
\begin{align}
(y,x) &\mapsto (3 y, x + \frac{1}{8}\sin (2 \pi x) + y) \mod 1
\end{align}
on the torus $\mathbb{T}^2 = (\mathbb{R}/\mathbb{Z})^2$,
the results of which are presented in Figure~\ref{f:attractor}.
Note that this map is given by a circle diffeomorphism $x\mapsto x + \frac{1}{8}\sin (2 \pi x) + y \mod 1$ in the fiber forced by an expanding circle map $y \mapsto 3y \mod 1$ in the base.
The left panel of 
Figure~\ref{f:attractor} shows ten thousand points of an orbit, appearing to lie dense in the torus. The right panel shows a time series of the second coordinate of twenty different orbits, for equidistant distributed initial points in the same fiber (i.e. 
with identical first coordinate). There appears to be a fast contraction
inside the fiber.

\begin{figure}[ht]
\centerline{
\hbox{
\includegraphics[height=6cm,width=6cm]{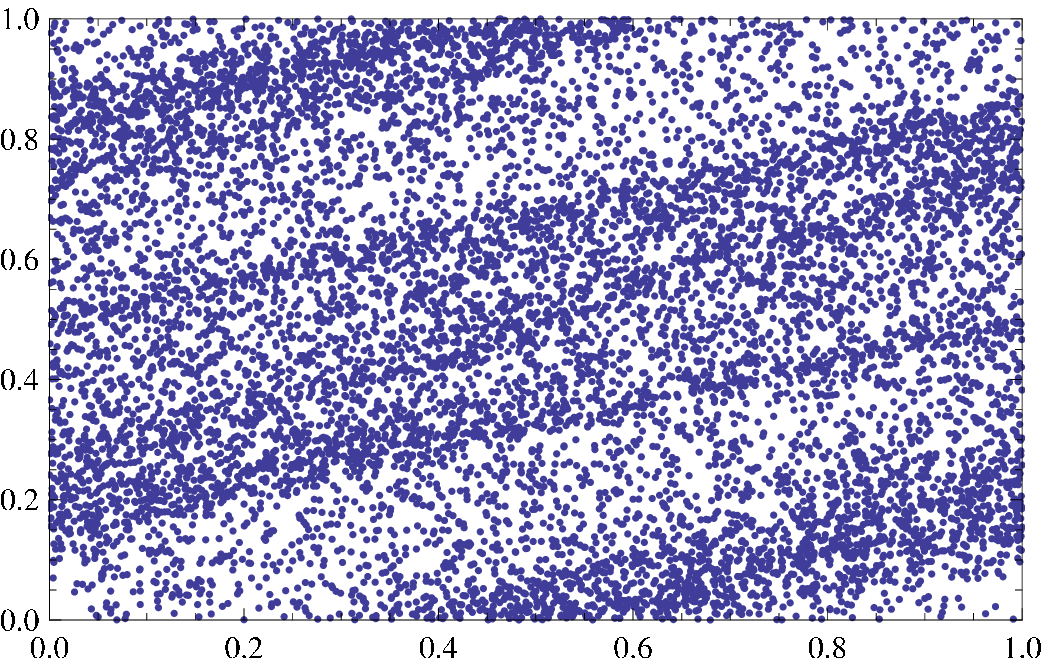}
\hspace{1in}
\includegraphics[height=6cm,width=7cm]{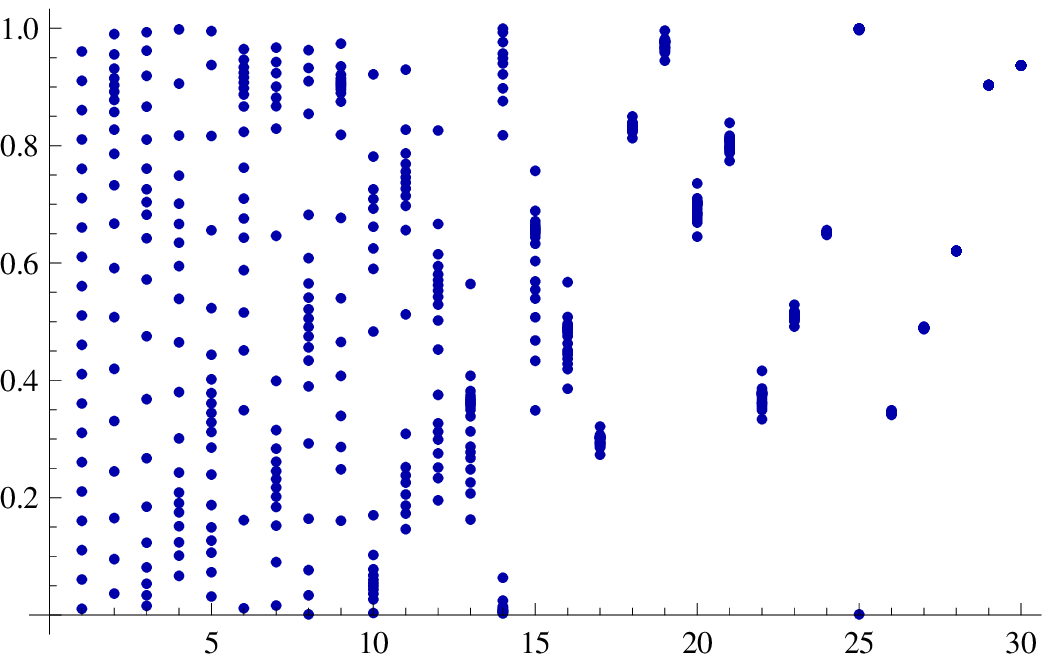}
}}
\caption{Numerical experiments on $(y,x) \mapsto (3 y, x + \frac{1}{8}\sin (2 \pi x) + y)  \mod 1$. The left frame shows ten thousand points of an orbit. The right frame shows time series for the $x$-coordinate starting from twenty different initial conditions with identical $y$-coordinates.
\label{f:attractor}}
\end{figure}

Numerical experiments as described above may be explained by the following 
result. Endow the space of smooth skew product systems $(y,x)\mapsto F(y,x) = (g(y) , f_y(x))$ on $\mathbb{T}^2$, considered as a subset of smooth endomorphisms, with the $C^k$ topology, $k\ge 2$.
A smooth endomorphism $g$ on the circle is called expanding if $|g'| >1$.
Recall that $F$ is topologically mixing if for each nonempty open $U,V \subset \mathbb{T}^2$,
$F^n(U)$ intersects $V$ for all large enough positive integers $n$; this implies the existence of
dense positive orbits.

\begin{Theorem}\label{t:main}
There is an open class of forced circle diffeomorphisms $(y,x)\mapsto F(y,x) = (g(y) , f_y(x))$,
forced by expanding circle maps $y \mapsto g(y)$, 
with the following properties:
\begin{enumerate}
 \item each map $F$ is topologically mixing,
 \item there is a subset $\Lambda \subset \mathbb{T}^2$ of full Lebesgue measure,
so that for any $(y,x_1), (y,x_2) \in \Lambda$, 
\begin{align}\label{e:synch}
\lim_{n\to \infty}  | F^n(y,x_1) - F^n(y,x_2) | &= 0.
\end{align}
\end{enumerate}
\end{Theorem}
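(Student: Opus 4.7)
The plan is to exhibit an explicit open class by imposing two $C^k$-open conditions on the finite-time behaviour of the fiber maps: a uniform contraction-on-average condition giving a negative fibered Lyapunov exponent, and a spreading condition asserting that some finite composition $f_{g^{n-1}y}\circ\cdots\circ f_y$ along a $g$-orbit covers the whole circle. Both are stable under $C^k$-perturbation, because they are phrased through open inequalities on finitely many iterates; so exhibiting one representative suffices to obtain an open class.

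The synchronization statement is proved via the attracting invariant graph in the natural extension signalled in the abstract. Let $\hat g:\hat Y\to\hat Y$ be the solenoid homeomorphism that is the inverse limit of $g$, with canonical projection $\pi:\hat Y\to\mathbb{T}^1$, and lift $F$ to $\hat F(\hat y,x)=(\hat g(\hat y),f_{\pi(\hat y)}(x))$. Let $\mu$ be the Lebesgue-equivalent invariant probability for $g$ and $\hat\mu$ its natural extension on $\hat Y$. The negative fibered Lyapunov exponent, combined with Birkhoff's theorem for $\hat F$ on $(\hat Y\times\mathbb{T}^1,\,\hat\mu\otimes\mathrm{Leb})$, implies that along $\hat\mu$-a.e.\ $\hat g$-fiber the distance $|\hat F^n(\hat y,x_1)-\hat F^n(\hat y,x_2)|$ tends to $0$ exponentially for Lebesgue-a.e.\ pair $x_1,x_2$. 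A standard random fixed point / contraction argument then produces a measurable section $\gamma:\hat Y\to\mathbb{T}^1$ whose graph is $\hat F$-invariant and attracts Lebesgue-a.e.\ orbit in each fiber. Two initial points $(y,x_1),(y,x_2)\in\mathbb{T}^2$ with common $y$ lift (for any fixed preimage $\hat y\in\pi^{-1}(y)$) to two points in the solenoid extension whose forward dynamics agree on the base; hence attraction to $\gamma$ downstairs yields a full Lebesgue-measure set $\Lambda\subset\mathbb{T}^2$ realizing \eqref{e:synch}.

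For topological mixing, the expanding map $g$ is itself topologically mixing, so for any nonempty open $U\subset\mathbb{T}^2$ the base-projection of $F^n(U)$ covers $\mathbb{T}^1$ as soon as $n$ is large. The spreading condition in the open class then ensures that over at least one such $y$ the fiber of $F^n(U)$ covers $\mathbb{T}^1$ entirely, so $F^n(U)\cap V\neq\emptyset$ for every nonempty open $V\subset\mathbb{T}^2$.

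The main obstacle is the apparent tension between the two requirements: contraction on average asks for $|\partial_x f_y|<1$ most of the time, while the spreading condition asks some composed fiber map to wrap around $\mathbb{T}^1$. These are reconciled by exploiting local expansion in a small region of the fiber combined with the forcing, so that finite-time composed fiber maps can cross the expanding region repeatedly and spread, while still having negative growth rate on $\hat\mu$-average. Making this balance robustly open in $C^k$, and, more subtly, upgrading the $\hat\mu$-a.e.\ attraction on the solenoid to a genuinely \emph{full Lebesgue-measure} synchronization set $\Lambda\subset\mathbb{T}^2$ (by controlling the projection of the exceptional set in $\hat Y$), will be the core technical work.
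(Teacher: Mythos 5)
Your proposal follows the right general architecture (natural extension to a solenoid skew product, invariant attracting graph, separate argument for mixing), but two of its load-bearing steps do not work as stated. First, the two ``open conditions'' you impose are problematic. A negative fibered Lyapunov exponent is not a $C^k$-open condition expressible through finitely many iterates: it is an integral of $\log|\partial_x f_y|$ against an invariant measure of $F$, and that measure changes (and need not vary continuously) with the map; any finite-time uniform version would force the composed fiber maps to be genuine contractions, which circle diffeomorphisms cannot be. Worse, the ``spreading condition'' is vacuous or impossible: a finite composition $f^n_y$ of circle diffeomorphisms is itself a diffeomorphism, so it tautologically covers $\mathbb{T}^1$ as a map, while the image of any arc under it is again an arc and never covers $\mathbb{T}^1$. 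Consequently your mixing argument --- ``over at least one $y$ the fiber of $F^n(U)$ covers $\mathbb{T}^1$'' --- has no mechanism behind it. The fiber of $F^n(U)$ over a point is a union of arcs indexed by the $g^n$-preimages, and one must show these arcs jointly become dense; the paper does this with a blender-like iterated-function-system argument (two affine contractions whose derivatives at the attracting fixed points sum to more than $1$, so the total width of the overlapping image strips grows, plus an auxiliary pair of maps to escape the interval $[p_0,p_1]$). Some argument of this kind is unavoidable and is where the robustness actually comes from.

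Second, and more fundamentally, a negative fiber exponent plus Birkhoff plus ``a standard random fixed point argument'' does not yield the conclusion that \emph{almost every pair} $x_1,x_2$ in the same fiber synchronizes. Nonuniform hyperbolicity gives each typical point a local stable set of positive measure in its fiber, and the standard constructions (Le Jan, Crauel) give an invariant family of measures that may well consist of $k\ge 2$ atoms per fiber, each attracting a positive-measure basin; two points in different basins then violate \eqref{e:synch}. The paper is explicit that such approaches ``do not determine the number of points in each fiber.'' The step you defer as ``core technical work'' is in fact the heart of the proof: one needs a global statement that for a.e.\ $\mathbf{y}$ an interval of length $>1-\varepsilon$ is contracted to length $<\varepsilon$ by some composed fiber map along the backward orbit (the paper's Lemma~\ref{l:aecontr}, which rests on the existence of a periodic base point over which the fiber return map has a unique hyperbolic attracting fixed point, together with density of strong unstable manifolds), combined with a martingale/Furstenberg-type convergence to conclude that the pushforwards converge to a \emph{single} delta measure, and finally a separate analysis of $F^{-1}$ (producing a repelling graph $\omega^-\neq\omega^+$) to convert the backward-in-time statement into genuine forward attraction. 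None of these ingredients is supplied or replaceable by the Lyapunov-exponent argument you sketch.
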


% The robust topological mixing property is found by a perturbation technique which can be seen as a simplified version
% of constructions with blenders \citep{MR1381990}.
% ; the simplification is possible because we consider 
% maps that are forced by topologically mixing base dynamics. 
% See Theorem~\ref{t:robust_trans} in Section~\ref{s:rt}. 
To prove the convergence property in Theorem~\ref{t:main}, we apply the natural extension of the endomorphism $F$ to a
homeomorphism on the product of a solenoid and a circle (this construction is described in Section~\ref{s:ne}).
This homeomorphism is likewise a skew product map formed by 
a circle diffeomorphism forced by a solenoid map.
It is shown to admit an attracting invariant graph (Theorem~\ref{t:omega} below), 
from which the result follows.

In the physics literature  a convergence phenomenon as in \eqref{e:synch} 
falls under the study of synchronization, see \citep{MR1913567} for a review.
Forced circle maps appear in various contexts where some sort of convergence of orbits features.
We give pointers to the literature for these different contexts and discuss the relation to our result.

\begin{enumerate}

\item
Quasiperiodically forced circle diffeomorphisms; the circle diffeomorphisms $f_y(x)$ are forced by
$g(y) = y + \alpha \mod 1$ with $\alpha$ irrational.
A large body of work is available in this area of research, related to the existence of strange non-chaotic attractors, 
see \citep{MR2504850} and references therein. 
% see \citep{MR2241977,MR2504850} and references therein. 
The notable difference with the context here is that the forcing consist of ergodic, but not mixing, dynamics.

 \item 
Randomly perturbed circle diffeomorphisms, including
iterated function systems \citep{ant84,MR2358052,1086.37026}
%, free subgroups of circle homeomorphisms \citep{MR1876932,MR1797749}, 
and circle diffeomorphisms with absolutely continuous noise \citep{lej87,MR2425065}.
Such systems allow a formulation as a skew product system.
The mentioned references give precise classifications of dynamics in 
the fibers both for iterated function systems and circle diffeomorphisms
with absolutely continuous i.i.d. noise.
For an iterated function system consisting of $m$ circle diffeomorphisms $f_1, \ldots,f_m$
this yields circle diffeomorphisms forced by a shift on $m$ symbols:
consider $\Sigma = \{ 1,\ldots,m \}^\mathbb{N}$
endowed with the product topology and, for $\boldsymbol{\omega} = (\omega_0,\omega_1,\ldots) \in \Sigma$,
the left shift  $\sigma \boldsymbol{\omega} = (\omega_1,\omega_2, \ldots)$.
The skew product system $F$ acting on $\Sigma \times \mathbb{T}$ is then given by 
\begin{align*}
 F(\boldsymbol{\omega},x) &= (\sigma \boldsymbol{\omega}, f_{\omega_0} (x)).
\end{align*}
The left shift is a topologically mixing map.
The dependence of the circle diffeomorphisms on $\boldsymbol{\omega}$ is of a restricted form:
they depend only on $\omega_0$ and not on $\omega_i, i>0$ (in \cite{1002.37017} the term 
``step skew product'' is used).
Our result can be seen as an extension where 
this restriction is removed and also as an extension to more 
general topologically mixing base dynamics.

\item
(Volume preserving) skew products over hyperbolic torus automorphisms  \citep{MR1738057,MR1838747,MR1972184}. One may think 
of small perturbations from $F: \mathbb{T}^2 \times \mathbb{T} \to \mathbb{T}^2 \times \mathbb{T}$,
\begin{align*}
 F(y,x) &=(A y , x ),
\end{align*}
where $A$ is a hyperbolic torus automorphism.
This research relates to the phenomenon of 
stable ergodicity. It also relates to work on partially hyperbolic systems
with mostly contracting central directions
\citep{bondiavia05,MR2068774}.
The above references contain results on 
delta measures in fibers, which go in the direction of
the convergence result in Theorem~\ref{t:main}. Reference \cite{hom} combines the approaches
of \citep{MR1738057,MR1838747} and this paper,
and contains a result akin to Theorem~\ref{t:main}. 
As reviewed in Section~\ref{s:ph}, one may embed the solenoid from the natural extension of the expanding circle map
as a hyperbolic attractor for a smooth diffeomorphism on a manifold,
so that the natural extension of the skew product system is partially hyperbolic on $\mathcal{S}\times \mathbb{T}$.

\end{enumerate}

Finally, skew product systems of circle diffeomorphisms 
over horseshoes and solenoids are also treated in \citep{1002.37017} 
with a different emphasis, proving the robust occurrence of dense sets of hyperbolic periodic orbits 
with different index (attracting or repelling in the fiber).
In \citep{MR2132437}, and continued in \citep{MR2545014,bondiagor}, the existence of
ergodic measures with zero Lyapounov exponent is investigated
in related contexts of skew product systems and partially hyperbolic systems.

\section{Natural extensions}\label{s:ne}

In this section we collect mostly known facts on extensions of 
skew product torus endomorphisms to skew product homeomorphisms.
These facts are used in the arguments in the following sections.

Consider a smooth expanding endomorphism 
$g : \mathbb{T} \to \mathbb{T}$ on the circle  $\mathbb{T} = \mathbb{R}/\mathbb{Z}$.
We note that $g$ possesses an absolutely continuous invariant measure
$\nu^+$,  equivalent to Lebesgue measure, see e.g. \citep[Section~III.1]{MR889254}.
In fact, $g$ is topologically conjugate to a linear expanding circle map for which
Lebesgue measure is invariant \citep{shub}.
The measure $\nu^+$ has density that is bounded and bounded away from zero.
We will consider skew product systems of circle diffeomorphisms $x \mapsto f_y(x)$
forced by the expanding circle map $y \mapsto g(y)$.
Write
\begin{align}
  F(y,x) &= (g(y) , f_y(x))
\end{align}
for the skew product map on the torus $\mathbb{T}^2$.
For iterates of $F$, we denote
\begin{align*}
 F^n(y,x) &= (g^n(y) , f_{g^{n-1}(y)}\circ\cdots\circ f_y(x))=(g^n(y) , f^n_y(x)).  
\end{align*}

The inverse limit construction \citep{MR0348794} extends $g$ to a homeomorphism on the solenoid, i.e. the space 
\begin{align*}
\mathcal{S} &=
\{ (\ldots,y_{-1,},y_0) \in \mathbb{T}^{-\mathbb{N}} \mid y_{-i} = g(y_{-i-1}) \}
\end{align*} 
endowed with the product topology.
We will also write $g$ for the extended map, where the context makes clear 
whether $g$ acts on $\mathbb{T}$ or $\mathcal{S}$. 
So, for $\mathbf{y} = (\ldots,y_{-1},y_0)$,  
\begin{align*}
g(\mathbf{y}) &= (\ldots,y_{-1},y_0,g(y_0)).
\end{align*} 
The induced skew product map on $\mathcal{S}\times \mathbb{T}$ will likewise be denoted by $F$, and we write 
\begin{align*}
F (\mathbf{y},x) &= (g(\mathbf{y}) , f_{\mathbf{y}} (x)).
\end{align*}
The inverse map is given by
\begin{align*}
F^{-1}(\mathbf{y},x) = (\ldots, y_{-2}, y_{-1}, (f_{y_{-1}})^{-1} (x)).
\end{align*}
% We also write $f^{-1}_{y_0} (x) =  (f_{y_{-1}})^{-1}$.

On $\mathbb{T}$ and $\mathcal{S}$ we use Borel $\sigma$-algebras $\mathcal{F}^+$ and
$\mathcal{F}$ respectively. 
Define the projection $\psi: \mathcal{S} \to \mathbb{T}$; $\psi(\mathbf{y}) = y_0$.
Then with $\mathcal{G} = \psi^{-1} (\mathcal{F}^+)$ we have $g^n \mathcal{G} \uparrow \mathcal{F}$
and $g: \mathcal{S} \to \mathcal{S}$ is a natural extension of $g :\mathbb{T} \to \mathbb{T}$
\citep[Appendix~A]{arn98}.  % (see also  \citep{0154.15703}, \citep[Chapter~10]{MR832433}).
The solenoid $\mathcal{S}$ has an invariant measure $\nu$ inherited from the 
invariant measure $\nu^+$ for $g$ on the circle;
\[
\nu (\{ y_{-r} \in I_r, \ldots, y_0 \in I_0 \}) = \nu^+ (g^{-r}(I_0) \cap g^{-r+1} (I_{1}) \cap \cdots \cap I_r).
\]
We will write $\lambda$ for Lebesgue measure on $\mathbb{T}$.
We also write $|I|$ for the length of an interval $I \subset \mathbb{T}$.

Let $\mu^+$ be an invariant measure for $F: \mathbb{T}^2 \to \mathbb{T}^2$ with marginal $\nu^+$; existence is guaranteed by \citep[Lemma~2.3]{cra90}. 
Write $\mu^+_{y}$ for the disintegrations of $\mu^+$.
Occasionally we also write $\mu^+_\mathbf{y}$ with the understanding that $\mu^+_\mathbf{y}$
depends only on the coordinate $y_0$ in $\mathbf{y} = (\ldots,y_{-1},y_0)$.
Invariance of $\mu^+$ means
% \begin{align*}
%  f_y \mu_y^+ = \sum_{z \in g^{-1}(y)} \frac{1}{g'(z)} \mu_z^+,
% \end{align*}
% so that
\begin{align*}
 \int_{g^{-1}(A)} f_y \mu_y^+ d \nu^+(y) &= \int_{g^{-1}(A)} \mu^+_{g (y)} d\nu^+(y) = \int_A \mu^+_y d\nu^+(y)
\end{align*}
for $A \in \mathcal{F}^+$ (the second equality by invariance of $\nu^+$ under $g$), see \citep{cra90}, \citep[Theorem~1.4.5]{arn98}.

The following lemma originating from \citep{cra90} relates invariant measures for the skew product system with one and two sided time.

\begin{Lemma}\label{l:furstenberg}
Given the invariant measure $\mu^+$ for $F$ acting on $\mathbb{T}^2$,
with marginal $\nu^+$ on $\mathbb{T}$,
there is an invariant measure $\mu$ for $F$ acting on $\mathcal{S}\times \mathbb{T}$,
with marginal $\nu$ on $\mathcal{S}$.
For $\nu$-almost all $\mathbf{y}  = (\ldots, y_{-1}, y_0)\in \mathcal{S}$,
the limit 
\begin{equation}\label{e:to+}
\mu_{\mathbf{y}} = \lim_{n\to\infty} f^n_{y_{-n}} \mu^+_{y_{-n}}
\end{equation}
gives its disintegrations. 
% Moreover, $\mu^+$ is ergodic if and only if $\mu$ is ergodic.
\end{Lemma}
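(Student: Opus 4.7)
My plan is to construct $\mu$ by specifying its disintegrations via a martingale convergence argument on the solenoid. The key point is that the sequence of candidate disintegrations is a bounded martingale whose convergence is built into the natural extension.

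I would first set up the filtration on $\mathcal{S}$. For each $n\ge 0$, let $\mathcal{G}_n$ be the $\sigma$-algebra generated by the projection $\mathbf{y}\mapsto y_{-n}$. This is an increasing filtration satisfying $\mathcal{G}_n\uparrow \mathcal{F}$ (the inverse-limit description of $\mathcal{S}$ already recalled in the paper, $g^n\mathcal{G}\uparrow\mathcal{F}$ with $\mathcal{G}=\mathcal{G}_0$). For $\phi\in C(\mathbb{T})$ define
\begin{align*}
 h_n(\mathbf{y}) \;=\; \int_{\mathbb{T}}\phi\bigl(f^n_{y_{-n}}(x)\bigr)\,d\mu^+_{y_{-n}}(x),
\end{align*}
which is $\mathcal{G}_n$-measurable and satisfies $|h_n|\le \|\phi\|_\infty$.

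The central step is the martingale property $E_{\nu}[h_n\mid \mathcal{G}_{n-1}]=h_{n-1}$. Factoring $f^n_{y_{-n}}=f^{n-1}_{y_{-n+1}}\!\circ f_{y_{-n}}$ reduces this to showing that, conditioned on $y_{-n+1}$, the averaged pushforward $f_{y_{-n}}\mu^+_{y_{-n}}$ equals $\mu^+_{y_{-n+1}}$. Unpacking the invariance identity
\begin{align*}
\int_{g^{-1}(A)} f_y\mu^+_y\,d\nu^+(y)\;=\;\int_A \mu^+_y\,d\nu^+(y)
\end{align*}
via the Rokhlin disintegration of $\nu^+$ along the finite fibers of $g$, and using that the conditional law of $y_{-n}$ given $y_{-n+1}$ under $\nu$ is exactly this Rokhlin disintegration, yields the required identity. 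The bounded martingale convergence theorem then gives $h_n\to h_\infty$ both $\nu$-a.e.\ and in $L^1(\nu)$. Taking a countable dense set $\{\phi_k\}\subset C(\mathbb{T})$, I obtain a full $\nu$-measure set $\Omega\subset\mathcal{S}$ on which $f^n_{y_{-n}}\mu^+_{y_{-n}}$ converges weak-$\ast$ to a probability measure $\mu_{\mathbf{y}}$. Setting $d\mu(\mathbf{y},x)=d\mu_{\mathbf{y}}(x)\,d\nu(\mathbf{y})$ defines a probability measure on $\mathcal{S}\times\mathbb{T}$ whose marginal on $\mathcal{S}$ is $\nu$.

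Invariance is then immediate at the level of disintegrations: for $\mathbf{y}\in\Omega$,
\begin{align*}
 f_{\mathbf{y}}\mu_{\mathbf{y}} \;=\; \lim_{n\to\infty} f_{y_0}f^n_{y_{-n}}\mu^+_{y_{-n}} \;=\; \lim_{n\to\infty} f^{n+1}_{y_{-n}}\mu^+_{y_{-n}} \;=\; \mu_{g(\mathbf{y})},
\end{align*}
after reindexing by $m=n+1$ and noting $(g\mathbf{y})_{-m}=y_{-n}$. A parallel check, projecting back to $\mathbb{T}^2$ via $(\mathbf{y},x)\mapsto(y_0,x)$ and using $E_\nu[h_\infty\mid\mathcal{G}_0]=h_0$, confirms that this projection recovers $\mu^+$. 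The step I expect to be the main technical obstacle is the clean verification of the martingale property: one must carefully identify the conditional expectation with the Rokhlin disintegration of $\nu^+$ over $g$ and match weights, after which everything else is standard martingale machinery and bookkeeping.
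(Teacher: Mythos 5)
Your proposal is correct and follows essentially the same route as the paper: the paper also sets up $f^n_{y_{-n}}\mu^+_{y_{-n}}$ as a bounded martingale with respect to the filtration $\mathcal{G}_n = g^n\psi^{-1}\mathcal{F}^+$ (which is exactly your filtration generated by $y_{-n}$) and concludes by martingale convergence, deferring the details to Arnold's Theorem 1.7.2. Your verification of the martingale property via the Rokhlin disintegration of $\nu^+$ over $g$, and the test-function formulation with a countable dense subset of $C(\mathbb{T})$, simply fills in the steps the paper leaves implicit.
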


\begin{proof}
The lemma is implied by \cite[Theorem 1.7.2]{arn98}. We include the line of reasoning.
To avoid confusion we write $\mathcal{B}$ (and not again $\mathcal{F}^+$) 
for the Borel $\sigma$-algebra on the circle of $x$-coordinates.
For fixed $B \in \mathcal{B}$, and for $\mathbf{y} = (\ldots, y_{-n},\ldots,y_0) \in \mathcal{S}$,
define  
\begin{align*}
\nu_{\mathbf{y}}^n (B) &=  f^n_{g^{-n}(\mathbf{y})} \mu^+_{y_{-n}} (B)
\end{align*}
as the push-forward by $f^n_{g^{-n}(\mathbf{y})}$ of $\mu^+_{y_{-n}}$, evaluated in $B$. Recall $\mathcal{G} = \psi^{-1} \mathcal{F}^+$, with $\mathcal{G}_n = g^{n} \mathcal{G}$
we have $\mathcal{G}_n \uparrow \mathcal{F}$ as $n \to \infty$.
One computes that $\mathbb{E}(\nu^n_{\mathbf{y}} (B)  | \mathcal{G}_m) = \nu^m_{\mathbf{y}}(B)$,
i.e. $y\mapsto \nu_{\mathbf{y}}^n (B)$ is a martingale with respect to the filtration $\mathcal{G}_n$.
As this holds for all fixed $B$, 
$\mu_{\mathbf{y}} (B) = \lim_{n\to\infty} f^n_{y_{-n}} \mu^+_{y_{-n}} (B)$
defines a probability measure for $\nu$-almost all $\mathbf{y}$.
% by the Vitali-Hahn-Saks theorem (see e.g. \cite{MR0117523}).
\end{proof}

Vice versa, given an invariant measure $\mu$ for $F$ on $\mathcal{S}\times \mathbb{T}$,
\begin{equation}\label{e:from+}
 \mu^+_y = {\mathbb E}( \mu | \mathcal{F}^+)_y
\end{equation}
is an invariant measure for $F$ on $\mathbb{T}^2$ \cite[Theorem 1.7.2]{arn98}.
Moreover, the correspondence maps ergodic measures to ergodic measures in either direction
\cite[Section~3]{cra90}.

% One can consider $\mathcal{S}$ embedded in a smooth manifold $M$ as a hyperbolic attractor for a smooth diffeomorphism $g$ on $M$. 

We will also need to study iterates of the inverse map  $F^{-1}$ on $\mathcal{S} \times \mathbb{T}$.
Noting that this interchanges stable and unstable directions, 
one obtains a convergence result similar to Lemma~\ref{l:furstenberg}. 
To state it, it is convenient to think of $g$ as acting on $[0,1]$; one can identify $0$ with $1$ to obtain the expanding circle map. 
The inverse limit construction extends $g$ to a map, also denoted by $g$, on 
\begin{align*}
\mathcal{I} &= \{ (\ldots, y_{-1}, y_0 ) \in [0,1]^{-\mathbb{N}} \;|\; y_{-i} = g (y_{-i-1}) \}. 
\end{align*}
We may think of $g$ as acting on $\Sigma \times [0,1]$
for a Cantor set $\Sigma = \{0,\ldots,m-1\}^\mathbb{N}$. 
The solenoid $\mathcal{S}$ is then given as a quotient 
\begin{align}\label{e:SI}
\mathcal{S} &= \Sigma\times [0,1]/ \sim,
\end{align}
identifying points $(\boldsymbol{\omega},0)$ and $(\boldsymbol{\nu},1)$ in $\Sigma \times \{0,1\}$ 
for which $g^{-1} (\boldsymbol{\omega},0) = g^{-1}(\boldsymbol{\nu},1)$.
%(the natural extension $g: \mathcal{S}\to \mathcal{S}$ is obtained from this by a gluing of $\Sigma \times\{0\}$ to $\Sigma \times \{1\}$).
We write $\mathbf{y} = (\boldsymbol{\omega},y_0) \in \Sigma \times \mathbb{T}$.
% The inverse map $g^{-1}$ acts as the shift $\sigma$, an expanding map, on $\Sigma$.
% A coordinate $\omega \in \Sigma$ determines $(\ldots,y_{-1})$,
% so that $f_{y_{-1}}^{-1}$ is a map $h_\omega$ depending on $\omega \in \Sigma$. 
% The map $F^{-1}$ thus defines a map 
% $(\omega,y_0,x) \mapsto (\sigma \omega, y_{-1} , h_\omega (x))$ 
% on $\Sigma \mathbb{T} \times \mathbb{T}$.
% This is a natural extension of $(\omega,x) \mapsto (\sigma \omega, , h_\omega (x))$ 
% on $\Sigma \mathbb{T} \times \mathbb{T}$.

Consider the projection $\psi : \Sigma\times [0,1] \to \Sigma$, 
$\psi (\boldsymbol{\omega},y_0) = \boldsymbol{\omega}$.
The Borel $\sigma$-algebra on $\Sigma\times [0,1]$ is 
$\mathcal{F} = \mathcal{F}^- \otimes \mathcal{F}^+$.
The inverse map $g^{-1}$ on $\Sigma\times [0,1]$ induces an expanding map on $\Sigma$ with an invariant measure $\nu^-$ (with $\nu$ the invariant measure for $g$ on $\Sigma\times [0,1]$).
Write $\mathcal{G} = \psi^{-1} \mathcal{F}^-$. 
The measure $\nu^-$ is computable from $\nu^+$: for a cylinder 
$C = C_{\nu_1\ldots\nu_k} = \{\boldsymbol{\omega}\;|\; \omega_i = \nu_i \text{ for } i=1,\ldots,k\}$,
it satisfies $\nu^-(C) = \nu (F^{-k} (C \times [0,1]) ) = \nu^+ (J)$ 
with $F^{-k} (C \times [0,1]) = \Sigma\times J$.
Now $g^{-n} \mathcal{G} \uparrow \mathcal{F}$ and $g^{-1}: \mathcal{I} \to \mathcal{I}$ is the natural extension of
$g^{-1}: [0,1]\to [0,1]$.
By a continuously differentiable coordinate change, the strong unstable lamination $\mathcal{F}^{uu}$
is affine; $\mathcal{F}^{uu} = \{ (\boldsymbol{\omega},y,x) \;|\; \boldsymbol{\omega},x\; \mathrm{ constant} \}$. This makes $F^{-1}$ like $F$ up to interchanging
strong stable and  strong unstable directions.
% (before a coordinate change that straightens $\mathcal{F}^{uu}$, the strong stable lamination is affine).
In the resulting coordinates, write 
$F^{-1} (\mathbf{y},x) = (g^{-1} (  \mathbf{y}) , k^{-1}_{\mathbf{y}} (x))$ (where $k^{-1} _{\mathbf{y}} (x)$ 
depends only on $\boldsymbol{\omega}$ and $x$).
Suppose $\zeta^{-}$ is an invariant measure for $F^{-1}$ on $\Sigma\times [0,1] \times \mathbb{T}$ 
with $\sigma$-algebra $\mathcal{G} \otimes \mathcal{B}$ and with marginal $\nu^-$.
We write $\zeta^-_{\boldsymbol{\omega}}$, $\boldsymbol{\omega} \in \Sigma$, or also
$\zeta^-_{\mathbf{y}}$,
for its disintegrations.

\begin{Lemma}\label{l:furstenberg-}
Given the invariant measure $\zeta^-$ for $F^{-1}$ acting on $\Sigma\times \mathbb{T}$, 
with marginal $\nu^-$ on $\Sigma$, there is an invariant measure $\zeta$ for $F^{-1}$ acting on $\Sigma\times [0,1]\times \mathbb{T}$ 
with marginal $\nu$ on $\Sigma\times [0,1]$.
For $\nu$-almost all $\mathbf{y}\in \mathcal{S}$, the limit
\begin{equation}\label{e:to-}
\zeta_{\mathbf{y}} = \lim_{n\to\infty} k^{-n}_{g^{n} (\mathbf{y})} \zeta^-_{g^{n} (\mathbf{y})}
\end{equation}
gives it disintegrations.
% Moreover, $\zeta^-$ is ergodic if and only if $\zeta$ is ergodic.
\end{Lemma}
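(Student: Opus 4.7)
The plan is to mirror the martingale argument used in the proof of Lemma~\ref{l:furstenberg}, with the roles of stable and unstable directions (and hence of $g$ and $g^{-1}$) interchanged. Since, in the coordinates introduced just before the lemma, $F^{-1}(\mathbf{y},x) = (g^{-1}(\mathbf{y}), k^{-1}_{\mathbf{y}}(x))$ has exactly the same structure as $F$ (skew product with fiber maps depending only on $\boldsymbol{\omega}$ and $x$), and since the filtration $g^{-n}\mathcal{G} \uparrow \mathcal{F}$ plays the role formerly played by $g^n \mathcal{G} \uparrow \mathcal{F}$, the argument should go through verbatim after this translation.

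Concretely, I would fix a Borel set $B \in \mathcal{B}$ in the $x$-circle and, for $\mathbf{y}\in\mathcal{S}$, define
\begin{align*}
\zeta^n_{\mathbf{y}}(B) \;=\; k^{-n}_{g^n(\mathbf{y})}\,\zeta^-_{g^n(\mathbf{y})}(B),
\end{align*}
the pushforward under $k^{-n}_{g^n(\mathbf{y})}$ of the disintegration of $\zeta^-$ at $g^n(\mathbf{y})$, evaluated on $B$. The first step is then to check that $\mathbf{y}\mapsto \zeta^n_{\mathbf{y}}(B)$ is a bounded martingale with respect to the filtration $\mathcal{G}_n = g^{-n}\mathcal{G}$. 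The key calculation is that $\mathbb{E}(\zeta^n_{\mathbf{y}}(B)\mid \mathcal{G}_m) = \zeta^m_{\mathbf{y}}(B)$ for $m\le n$; this follows from invariance of $\zeta^-$ under $F^{-1}$ and of $\nu^-$ under $g^{-1}$, exactly as in the proof of Lemma~\ref{l:furstenberg} (with the pushforward under $k^{-(n-m)}$ absorbing the $(n-m)$ extra iterates).

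By the martingale convergence theorem, for each fixed $B$ the limit $\zeta_{\mathbf{y}}(B) = \lim_{n\to\infty}\zeta^n_{\mathbf{y}}(B)$ exists for $\nu$-almost every $\mathbf{y}$ and is $\mathcal{F}$-measurable. Taking a countable collection of $B$'s generating $\mathcal{B}$ and applying Carathéodory extension, one obtains a probability measure $\zeta_{\mathbf{y}}$ on the $x$-circle for $\nu$-almost every $\mathbf{y}$, which can then be assembled into an $F^{-1}$-invariant measure $\zeta$ on $\Sigma\times[0,1]\times\mathbb{T}$ with marginal $\nu$ via $d\zeta(\mathbf{y},x) = d\zeta_{\mathbf{y}}(x)\,d\nu(\mathbf{y})$. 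Invariance of $\zeta$ is inherited from the defining martingale identity together with invariance of $\nu$.

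The only real subtlety, beyond bookkeeping, is verifying that the filtration $g^{-n}\mathcal{G}\uparrow \mathcal{F}$ is the correct one to use here (so that the equality in distribution $\nu^-\circ\psi = \nu^-$ used in the martingale identity truly holds), and that the pointwise almost-sure limits for each fixed $B$ can be upgraded to a genuine probability measure — standard arguments via Lévy's upward convergence theorem and a countable algebra of intervals handle this, as was tacitly done in Lemma~\ref{l:furstenberg}. I therefore expect this to be the main, but still routine, obstacle; no new ideas beyond those in the proof of Lemma~\ref{l:furstenberg} (and \cite[Theorem~1.7.2]{arn98}) are needed.
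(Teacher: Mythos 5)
Your proposal is correct and follows exactly the route the paper intends: the paper's own proof is a one-line remark that the argument of Lemma~\ref{l:furstenberg} (the martingale/filtration argument via \cite[Theorem~1.7.2]{arn98}) applies verbatim after interchanging the roles of the stable and unstable directions, which is precisely what you spell out.
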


\begin{proof}
% We may consider $g$ acting on $[0,1]$; one can identify $0$ with $1$ to obtain the expanding circle
% map. The natural extension is  a map $g : C \times [0,1] \to C \times [0,1]$
% for a Cantor set $C$ (the natural extension $g: \mathcal{S}\to \mathcal{S}$ 
% is obtained from this by gluiing $C \times\{0\}$ to $C \times \{1\}$.)
% The inverse map $g^{-1}$ acts as an expanding map on $C$.
As for Lemma~\ref{l:furstenberg} one can apply \cite[Theorem 1.7.2]{arn98} to prove the lemma.
\end{proof}

\section{Partial hyperbolicity}\label{s:ph}

See e.g. \cite[Section~17.1]{kathas97} for the standard construction of the solenoid as an attractor for a diffeomorphism on $(-1,1)^2\times\mathbb{T}$.
Likewise the solenoid can appear as an attractor for a diffeomorphism on 
$(-1,1)^d\times\mathbb{T}$, $d\ge 2$.
Under an assumption
\begin{align}\label{e:parthyp}
 m = \max_{y,x} f'_y (x) <& \min_{y} g'(y) = M,
 \end{align}
one may embed the solenoid as a hyperbolic attractor,
so that the class of skew product systems is partially hyperbolic \citep{bondiavia05}  on $\mathcal{S}\times \mathbb{T}$.  
% A neighborhood of $\mathcal{S} \times \mathbb{T}$ is diffeomorphic to $(-1,1)^d \times \mathbb{T}^2$ for some $d\ge 2$.
This results in a partially hyperbolic splitting in 
one-dimensional strong unstable directions, one-dimensional
center directions (the fibers) and the remaining $d$-dimensional strong stable directions. 
% Below we assume this splitting to exist.
Write $N = (-1,1)^d \times \mathbb{T}^2$ for the (open neighborhood in the) manifold
that contains $\mathcal{S}\times \mathbb{T}$ as hyperbolic attractor;
the map $F$ on $\mathcal{S}\times \mathbb{T}$ is extended to a diffeomorphism $F$ on $N$.
% Note that one can take $d$-dimensional, with $d \ge 2$, strong stable directions. 

Write $W^{ss}(\mathbf{y},x)$ for the strong stable manifold of
$(\mathbf{y},x)$ and 
$W^{uu}(\mathbf{y},x)$ for the strong unstable manifold of
$(\mathbf{y},x)$.
The strong stable and strong unstable manifolds form laminations $\mathcal{F}^{ss}$ and
$\mathcal{F}^{uu}$. %  of $\mathcal{S} \times \mathbb{T}$.

\begin{Lemma}
Assuming \eqref{e:parthyp}, there exists 
an embedding of $\mathcal{S}$ as a hyperbolic attractor for a smooth diffeomorphism on a manifold,
so that the class of skew product systems is partially hyperbolic on $\mathcal{S}\times \mathbb{T}$.
For $\max_{y,x} \{f'_y (x),1/f'_y(x)\}$ sufficiently close to 1 and $M>2$,
such an embedding exists for which  $\mathcal{F}^{ss}$ and 
$\mathcal{F}^{uu}$ are continuously differentiable laminations.
\end{Lemma}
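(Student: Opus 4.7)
The plan is to proceed in three stages: build the embedding via the classical Smale--Williams construction, verify that \eqref{e:parthyp} supplies the gaps required for partial hyperbolicity, and upgrade $\mathcal{F}^{ss}$ and $\mathcal{F}^{uu}$ to $C^1$ laminations using center bunching. First I would fix a linear strong contraction $A$ of $(-1,1)^d$ with norm $\|A\|$ left as a free parameter. Define a skew product diffeomorphism
\[
 \tilde{F}(u,y,x) = \bigl(Au+\varphi(y),\, g(y),\, f_y(x)\bigr)
\]
on $N = (-1,1)^d \times \mathbb{T}^2$, where $\varphi : \mathbb{T} \to (-1,1)^d$ is chosen so that the $m = \deg g$ inverse branches of $g$ send $(-1,1)^d\times\mathbb{T}$ to disjoint sub-solid-tori of itself, following \cite[Section~17.1]{kathas97}. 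The maximal invariant set $\bigcap_{n\ge 0}\tilde{F}^n((-1,1)^d\times\mathbb{T})$ is then homeomorphic to the inverse-limit solenoid $\mathcal{S}$; together with the trivial fiber direction this realises $\mathcal{S}\times\mathbb{T}$ as a hyperbolic attractor of $\tilde{F}$, with restriction equal to $F$.

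Second, I would verify partial hyperbolicity on $\mathcal{S}\times\mathbb{T}$. The tangent bundle of $N$ inherits the natural $\tilde{F}$-invariant splitting $E^{ss}\oplus E^c \oplus E^{uu}$, with $E^{ss}$ tangent to $(-1,1)^d$, $E^c$ tangent to the fiber circle, and $E^{uu}$ tangent to the base circle. The corresponding derivative rates are $\|A\|$, the interval $[\inf 1/f'_y,\sup f'_y]$, and $\ge M$. Hypothesis \eqref{e:parthyp} gives the center-unstable gap $\sup f'_y = m < M$, while shrinking $\|A\|$ below $\inf 1/f'_y$ secures the stable-center gap. This part is essentially the construction of \cite{bondiavia05} and delivers the first assertion of the lemma.

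Third, and this is the core of the lemma, I would upgrade the laminations to $C^1$ using Hirsch--Pugh--Shub theory in the form summarised in \cite{bondiavia05}. The strong stable lamination is $C^1$ provided the center-bunching inequality
\[
 \|A\| \cdot \sup_{y,x} f'_y(x) \cdot \sup_{y,x} \frac{1}{f'_y(x)} < 1
\]
holds, which is arranged simply by taking $\|A\|$ small enough. Applying the same theory to $\tilde{F}^{-1}$ (for which the strong unstable of $\tilde{F}$ plays the role of strong stable) gives the symmetric condition
\[
 \frac{1}{M} \cdot \sup_{y,x} f'_y(x) \cdot \sup_{y,x} \frac{1}{f'_y(x)} < 1.
\]
Here $M>2$ contributes a factor $<1/2$, while $\max_{y,x}\{f'_y(x),1/f'_y(x)\}$ close to $1$ keeps the distortion factor close to $1$, so the inequality holds strictly. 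The main obstacle I expect is pinning down the bunching inequality in precisely the form needed for this embedding; the threshold $M>2$ is characteristic of the $C^1$-bunching bound, and the near-isometric hypothesis on $f_y$ is exactly what makes the inequality non-vacuous rather than a strong quantitative constraint. Once both inequalities are verified, the standard theory yields $C^1$ strong stable and strong unstable laminations, concluding the proof.
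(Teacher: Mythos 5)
Your first two stages (the Smale--Williams embedding and the verification of partial hyperbolicity from \eqref{e:parthyp}) match the paper. The gap is in the third stage, and it is exactly at the point you yourself identify as "the core of the lemma."

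The bunching inequality you write for $\mathcal{F}^{uu}$ is not the right one. Running the graph transform for the invariant section $N\to\mathcal{L}(E^{uu},E^{ss,c})$ whose graph is $E^{uu}$, the fiber maps contract by at most $m/M$ (center expansion over unstable expansion), and the $C^{1}$ section theorem requires this fiber contraction to beat the Lipschitz constant of the \emph{inverse base map}, whose largest expansion is $1/\lambda^{ss}$ along the strong stable direction. The correct condition is therefore $m/M<\lambda^{ss}$, where $\lambda^{ss}$ is the weakest admissible bound on the strong stable contraction --- not $\frac{1}{M}\cdot\sup f'_y\cdot\sup(1/f'_y)<1$. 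Your inequality is essentially $m^{2}/M<1$, which holds for \emph{every} $M>1$ once the $f_y$ are near rotations, so it cannot account for the hypothesis $M>2$; that threshold arises precisely because $\lambda^{ss}$ can be pushed up to, but not beyond, $\tfrac12$, and even achieving $\lambda^{ss}$ close to $\tfrac12$ forces one to take the ambient stable dimension $d$ large (in a genuine solid torus, fitting $\deg g\ge 3$ disjoint sub-tori forces a much stronger contraction). Your proposal never engages with the choice of $d$ or of $\|A\|$ from this side.

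Worse, your treatment of $\mathcal{F}^{ss}$ actively conflicts with this: you propose to secure the $C^{1}$-ness of the strong stable lamination by taking $\|A\|$ small, but shrinking $\|A\|$ shrinks $\lambda^{ss}$ and hence destroys the condition $m/M<\lambda^{ss}$ needed for $\mathcal{F}^{uu}$. The two requirements, as you set them up, pull in opposite directions, and with the correct inequalities they cannot be satisfied simultaneously by tuning $\|A\|$ alone. The paper escapes this because no bunching at all is needed for $\mathcal{F}^{ss}$: since the fiber map $f_{\mathbf{y}}$ depends only on the coordinate $y_0$, the local strong stable leaf of $(\mathbf{y},x)$ is the product set $\psi^{-1}(y_0)\times\{x\}$, so $\mathcal{F}^{ss}$ is affine in the natural coordinates and trivially continuously differentiable. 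That observation frees $\|A\|$ to be taken as weak as the solenoid packing allows (close to $\tfrac12$ for $d$ large), which is what makes $m/M<\lambda^{ss}$ attainable exactly when $M>2$ and $m$ is near $1$.
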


\begin{proof}
% See e.g. \cite[Section~17.1]{kathas97} for the standard construction of the solenoid as an attractor
% for a diffeomorphism on a manifold.
% Write $N = (-1,1)^d \times \mathbb{T}^2$ for the (open neighborhood in the) manifold
% that contains $\mathcal{S}\times \mathbb{T}$ as hyperbolic attractor;
% The map $F$ on $\mathcal{S}\times \mathbb{T}$ is extended to a diffeomorphism $F$ on $N$.
% Note that one can take $d$-dimensional, with $d \ge 2$, strong stable directions. 
In the strong stable directions, taking the dimension $d$ sufficiently large
(depending on the degree of the expanding circle map $g$), 
distances can be assumed to be contracted by a factor 
close to $\frac{1}{2}$. 

Observe that, forced by the form of the map $F$, the local strong stable manifold $W^{ss}_\mathrm{loc}(\mathbf{y},a_0)$ 
for any $\mathbf{y} \in \psi^{-1} (y_0)$
equals $\psi^{-1} (y_0) \times \{a_0\}$.
The strong stable lamination is
%
% affine $\mathcal{F}^{ss} = \{ (\mathbf{y},x) \;|\;   \}
%
therefore continuously differentiable.
If $f'_y (x)$ is near $1$ for all $x,y$, 
then with $M>2$ (the expanding map $g$ has to be of degree three or higher)
spectral gap conditions are satisfied that imply that
the strong unstable lamination is continuously differentiable. 
% \citep{MR0501173}.
% Note that for an expanding map $g$ of high degree, and a contraction factor close to $\frac{1}{2}$
% in the strong stable directions, the strong stable directions need to be of sufficiently high dimension.
This is checked by going through the construction of the strong unstable lamination by graph transform techniques \citep{MR0501173}, as we will indicate.

One obtains the strong unstable lamination by integrating the
line field formed by the strong unstable directions. 
Write $T N = N \times E^{uu} \times E^{ss,c}$ so that the strong unstable directions at a point
$x\in \mathcal{S}\times \mathbb{T}$ are given as the graph of a linear map
in $\mathcal{L}(E^{uu},E^{ss,c})$. 
The strong unstable directions are then given by the graph of a section
$\mathcal{S}\times \mathbb{T} \mapsto \mathcal{L}(E^{uu},E^{ss,c})$ that is invariant
under the induced diffeomorphism $\hat{F} : \mathcal{S}\times \mathbb{T} \times \mathcal{L}(E^{uu},E^{ss,c}) \to \mathcal{S}\times \mathbb{T} \times \mathcal{L}(E^{uu},E^{ss,c})$;
\begin{align}
  \hat{F}(\mathbf{y},x,\alpha) &= (F(\mathbf{y},x) , \beta), \qquad  \mathrm{graph }\,\beta = DF (\mathbf{y},x) \mathrm{graph }\,\alpha.
\end{align}  
It is possible to construct strong unstable directions on $N$ that extend those
on $\mathcal{S}\times \mathbb{T}$ by choosing a lamination on a fundamental 
domain in its basin of attraction and iterating under the graph transform
\cite[Appendix~1]{pt}.
This produces a graph $V^{uu}$  of a section 
$N \mapsto \mathcal{L}(E^{uu},E^{ss,c})$ that is invariant under $\hat{F}$.
 
If $\lambda^{ss}$ is the strongest rate of contraction, i.e. for some $C>0$ and
$i \in \mathbb{N}$,
\begin{align*} 
|DF^{i} (n) v| &\ge C (\lambda^{ss})^{i} |v|,
\end{align*}   
for each $n\in N$, $v \in T_xN$,
then such a graph $V^{uu}$ is normally hyperbolic for $m / M < \lambda^{ss}$.
Indeed, the contraction of $\hat{F}$ along the fibers $\mathcal{L}(E^{uu},E^{ss,c})$ is  estimated by
\begin{align}
   D\hat{F}^i(n,\alpha) (0,w) &\le C (m/M)^i |w|,
\end{align}
compare \citep[Appendix~1]{pt}.
Normal hyperbolicity holds for $\lambda^{ss}$ near 
$\frac{1}{2}$, $m$ near $1$ and $M >2$.
Normal hyperbolicity implies that $V^{uu}$ is continuously differentiable and
this in turn implies that the strong unstable lamination is continuously differentiable 
\citep{MR1432307}. % \citep{MR1788044,MR1432307}.
\end{proof}

\section{Robust transitivity}\label{s:rt}

We record that 
\[
F_{i,j} (y,x) = (iy , x  + jy) \mod 1, 
\]
with $i>1, j$ integers, is not topologically transitive;
it leaves all circles parallel to $jx = (i-1)y  \mod 1$ invariant.
Note that $F_{i,j}$ induces a homeomorphism on $\mathcal{S} \times \mathbb{T}$;
$F_{i,j} (\mathbf{y},x) = (i \mathbf{y} , x + i y_0)$, with inverse
$F^{-1}_{i,j} (\mathbf{y},x) = (\ldots ,y_{-2},y_{-1},x - j y_{-1})$.

The following result provides a class of robust topologically mixing skew product maps.
We use ad hoc arguments, relying on the skew product structure with topologically mixing base dynamics, 
to prove it, 
but the arguments bear a resemblance to the technique of blenders introduced in \citep{MR1381990}.

\begin{Theorem}\label{t:robust_trans}
There exist arbitrarily small smooth perturbations $F$, $F(y,x) = (g(y),f_y(x))$, 
of $F_{i,0}$, $i>1$, % $j \in \mathbb{Z}$,
% that are robustly transitive skew product maps. 
that are robustly topologically mixing skew product maps
(considered on either $\mathbb{T}^2$ or $\mathcal{S}\times\mathbb{T}$).

Moreover, 
\begin{enumerate}
 \item 
there are $k\in\mathbb{N}$, $\hat{y}\in\mathbb{T}$,
with
$g^k(\hat{y}) = \hat{y}$ and $f^k_{\hat{y}}$ possessing a unique hyperbolic attracting and hyperbolic repelling fixed point,
\item
for any $(\mathbf{y},x) \in \mathcal{S}\times \mathbb{T}$,
the strong stable and strong unstable manifolds 
$W^{ss} (\mathbf{y},x)$, $W^{uu} (\mathbf{y},x)$
are dense in $\mathcal{S}\times \mathbb{T}$. 
\end{enumerate}
\end{Theorem}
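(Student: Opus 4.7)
My plan is to construct an explicit perturbation establishing (1) and then derive robust mixing together with (2) from the structure it introduces. For the construction, I take $g(y) = iy \bmod 1$ together with $f_y(x) = x + \alpha(y) + \epsilon\, \varphi(y)\sin(2\pi x)$, where $\alpha$ is a small $C^k$ function with $\alpha(0)=0$, $\varphi$ is a $C^k$ bump with $\varphi(0)=1$, and $\epsilon$ is chosen so each $f_y$ is a diffeomorphism. Then $\hat{y}=0$ is fixed by $g$, and $f_0(x) = x + \epsilon\sin(2\pi x)$ is Morse-Smale with unique hyperbolic repeller $\hat{x}_r = 0$ and attractor $\hat{x}_a = 1/2$; the remaining freedom in $\alpha(y)$ provides the ``twist'' in nearby fibers needed below. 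This gives (1) with $k=1$, and since Morse-Smale hyperbolicity of $f_0$ is a $C^1$-open condition, the property is robust.

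For robust topological mixing, given nonempty open $U, V \subset \mathcal{S}\times\mathbb{T}$, I would argue in three moves. First, take a small strong-unstable arc $\gamma \subset U$ projecting to a short base interval, and use topological mixing of $g$ to ensure that the base projection of $F^n(\gamma)$ is $\mathcal{S}$-dense for all large $n$. Second, and this is the blender-like step, show that $F^n(\gamma)$ contains a continuous subarc whose $x$-projection covers an entire fundamental interval of $\mathbb{T}$: forward iterates passing through the Morse-Smale fiber contract parts of $\gamma$ toward $\hat{x}_a$, while the twist $\alpha$ combined with trajectories that return to fiber $\hat{y}$ from different approach angles creates variation in the $x$-direction sweeping across $\mathbb{T}$. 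Third, the resulting $\mathcal{S}$-dense, $\mathbb{T}$-spanning curve in $F^n(U)$ must intersect $V$. Robustness follows from the $C^k$-openness of each ingredient.

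For (2), density of $W^{ss}(\mathbf{y},x)$ in $\mathcal{S}\times\mathbb{T}$ is obtained from the description
\[
W^{ss}(\mathbf{y},x)\cap (\mathcal{S}\times\mathbb{T}) = \bigcup_{N\ge 0} F^{-N}\bigl(\psi^{-1}(g^N y_0) \times \{f^N_{\mathbf{y}}(x)\}\bigr).
\]
A point on the right-hand side is indexed by $u_0 \in g^{-N}(\{g^N y_0\})$ together with a backward solenoid extension, and has fiber coordinate $(f^N_{u_0})^{-1}(f^N_{y_0}(x))$. The $i^N$ choices of $u_0$ are $i^{-N}$-dense in $\mathbb{T}$, and the backward extension is free enough to approximate any solenoid target, giving $\mathcal{S}$-density of the base coordinate. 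For $\mathbb{T}$-density of the fiber coordinate, one shows that as $u_0$ ranges over these preimages the values $(f^N_{u_0})^{-1}(f^N_{y_0}(x))$ equidistribute in $\mathbb{T}$, using the same blender-like mechanism: most preimages have their forward $g$-orbit equidistributing on $\mathbb{T}$, so its sojourn near $\hat{y}$ produces strong variation in $(f^N_{u_0})^{-1}$ through the Morse-Smale fiber. Density of $W^{uu}(\mathbf{y},x)$ follows by the dual argument applied to $F^{-1}$ on the inverse limit $\mathcal{I}$ from Section~\ref{s:ne}.

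The main obstacle is the blender-like step, both in the mixing argument (finding a curve in $F^n(\gamma)$ whose $x$-projection covers $\mathbb{T}$) and in establishing $\mathbb{T}$-density for $W^{ss}$ (equidistribution of $(f^N_{u_0})^{-1}(f^N_{y_0}(x))$ over preimages of $g^N y_0$). Both amount to orchestrating the interaction between the uniform base expansion of $g$ and the localised fiber hyperbolicity at $\hat{y}$, controlling distortion and return times in a quantitative way, in the spirit of \citep{MR1381990}.
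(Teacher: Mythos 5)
Your construction for part (1) is fine, and you have correctly identified that the heart of the matter is a blender-like covering property; but that step is precisely where your proposal stops, and the mechanism you sketch does not obviously supply it. With a single fixed point $\hat y=0$ in the base and a single Morse--Smale fiber map $f_0$, orbits passing near $\hat y$ are simply contracted toward the continuation of the attracting fixed point; there is no reason the images of a small unstable arc should sweep across a full fundamental interval of $\mathbb{T}$, and the unspecified ``twist'' $\alpha$ cannot be invoked to do this without a quantitative argument. The paper's proof instead builds the expansion of widths explicitly: it takes an invariant Cantor set in the base on which $g$ is conjugate to a full shift on $n\ge 4$ symbols, and over it realizes an iterated function system of circle diffeomorphisms $h_0,h_1,h_2,h_3=h_2^{-1}$ with \emph{two} contractions $h_0,h_1$ whose attracting fixed points $p_0,p_1$ are close, which are affine on $[p_0,p_1]$, and whose derivatives satisfy $(h_0)'(p_0)+(h_1)'(p_1)>1$. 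A strip of width $\varepsilon$ then maps to two strips of total width $c\varepsilon$ with $c>1$, which is the engine producing density in $\Sigma_2^+\times[p_0,p_1]$; the repelling fixed point $q_2$ of $h_2$ inside $[p_0,p_1]$ then spreads this onto all of $\mathbb{T}$. You need at least two overlapping fiber contractions with derivative sum exceeding $1$ (or some equivalent device); one localized Morse--Smale map does not generate the width growth. Robustness also requires handling perturbed fiber maps that depend on the entire base symbol sequence, which the paper does by replacing product strips with strips between graphs over $\Sigma_2^+$ --- your appeal to ``$C^k$-openness of each ingredient'' glosses over this.

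For part (2) there are two further problems. Your claim that the fiber coordinates $(f^N_{u_0})^{-1}(f^N_{y_0}(x))$ ``equidistribute'' over the $i^N$ preimages $u_0$ is again the unproved blender step in disguise. The paper avoids this by a homoclinic-type argument: the two-dimensional unstable manifold of the periodic point $Q$ (over $q_2$) is dense in $\mathcal{S}\times\mathbb{T}$, the stable manifold of $Q$ accumulates on the periodic point $P$ (over $p_0$), and a backward-iteration argument gives $W^u(Q)\subset\overline{W^{uu}(P)}$, whence every strong unstable manifold is dense. Second, density of the strong \emph{stable} manifolds is not ``the dual argument'' for free: the construction is not time-symmetric, and the paper explicitly makes \emph{further} small perturbations so that the inverse system also carries the required structure. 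As written, your proposal names the right difficulty but does not resolve it.
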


\begin{proof}% [Second proof]
Consider $\Sigma_n^+ = \{0,\ldots,n\}^\mathbb{N}$ endowed with the product topology
and let $\sigma: \Sigma_n^+ \to \Sigma_n^+$ be the left shift.
The base map $g$ (or some iterate thereof) 
admits invariant Cantor sets on which the dynamics 
is topologically conjugate to $\sigma: \Sigma_n^+ \mapsto \Sigma_n^+$.
This observation and the following lemma imply the existence of 
robust topologically mixing maps $F$ acting on $\mathbb{T}^2$ as stated in the theorem. 

\begin{Lemma}\label{l:ifs}
There exists a
skew product map
\[
H(\boldsymbol{\omega},x) = (\sigma \boldsymbol{\omega}, h_{\boldsymbol{\omega}} (x)) 
\]
on $\Sigma_n^+\times \mathbb{T}$, $n\ge 4$, that is robustly topologically mixing
under continuous perturbations of 
% $\boldsymbol{\omega} \mapsto \sigma\boldsymbol{\omega}$ and 
$\boldsymbol{\omega} \mapsto h_{\boldsymbol{\omega}}$ in the $C^1$ topology.
% perturbations of $(\boldsymbol{\omega},x) \mapsto h_{\boldsymbol{\omega}} (x)$ that are 
% continuous in $\boldsymbol{\omega}$ and $C^1$ in $x$.
\end{Lemma}

\begin{proof}% [Proof of Lemma]
Following \citep{ifs}, take circle diffeomorphisms
$h_0$, $h_1$, $h_2$ so that
\begin{enumerate}
 \item $h_i$ has a unique hyperbolic attracting fixed point $p_i$ and a
unique hyperbolic repelling fixed point $q_i$, $i=0,1,2$; the fixed points are mutually disjoint,
 \item $p_0$, $p_1$ are close to each other and $h_0, h_1$ are affine on
 $[p_0,p_{1}]$,
 \item $p_2\in (p_0,p_1)$,
 \item $\frac{1}{2} < (h_0)' (p_0),(h_1)' (p_{1})  < 1$.
\end{enumerate}
The iterated function system generated by $h_0,h_1,h_2,h_3 = h_2^{-1}$ is robustly minimal under $C^1$ small perturbations of $h_0,\ldots,h_3$.
We give the main steps in the reasoning, referring to \citep{ifs} for details.
Consider the iterated function system generated by $h_0,h_1$. For a compact subset 
$S \subset \mathbb{T}$, write $\mathcal{L} (S) = h_0(S)\cup h_1(S)$.
Let $E_{in} \subset [p_0,p_1] \subset E_{out}$ be intervals close to $[p_0,p_1]$ on which
$h_0,h_1$ are affine.
Then 
\begin{equation}\label{e:ifsinclusion}
E_{in} \subset \mathcal{L}(E_{in}) \subset [p_0,p_1] \subset \mathcal{L}(E_{out}) \subset E_{out}
\end{equation}
and $\mathcal{L}^i (E_{in}), \mathcal{L}^i(R_{out})$ converge to $[p_0,p_1]$ 
in the Hausdorff topology as $i\to \infty$.
Since $h_0$ and $h_1$ are contractions, this shows that the iterated function system
generated by $h_0,h_1$ is minimal on $[p_0,p_1]$.
From the properties of $h_2,h_3$ it is easily concluded that the iterated function system
generated by $h_0,h_1,h_2,h_3$ is minimal on $\mathbb{T}$.
% For details we refer to  \citep{ifs}.

% Note that the skew product system 
% $H (\boldsymbol{\omega},x) = (\sigma (\boldsymbol{\omega}),h_{\omega_0}(x))$ is topologically mixing. 
% Write $\Sigma_2^+ = \{0,1\}^\mathbb{N} \subset \Sigma_n^+$, 
% and note that high iterates $H^n$ map $\Sigma_2^+ \times [p_0,p_{1}]$
% to $2^n$ thin strips (with width going to 0 as $n\to \infty$)
% covering $\Sigma_2^+ \times [p_0,p_{1}]$.
% The same holds true for small perturbations of $H$ as in the 
% statement of the lemma. The lemma follows.

The skew product system 
$H (\boldsymbol{\omega},x) = (\sigma \boldsymbol{\omega},h_{\omega_0}(x))$ is topologically mixing.
Indeed, write $\Sigma_2^+ = \{0,1\}^\mathbb{N} \subset \Sigma_n^+$ and take an open set 
$U\subset \Sigma_2^+ \times [p_0,p_{1}]$. A high iterate $H^n (U)$ contains a strip $\Sigma^+_2 \times J$  
in $\Sigma_2^+ \times [p_0,p_{1}]$. 
Now $H^{n+1}(U)$ maps $\Sigma^+_2 \times J$ to two strips with total
width larger than $c |J|$ with $c = (h_0)'(p_0) + (h_1)'(p_1) > 1$.
Further iterates $H^{n+k} (U)$ contain $2^k$ strips
of increasing total width so that for some $k>0$,  $H^{n+k} (U)$ % covers
lies dense in $\Sigma_2^+ \times I$ for any $I \subset [p_0,p_{1}]$.
Iterates of $\Sigma_n^+ \times [p_0,p_1]$ under $H$ % cover
lie dense in  
$\Sigma_n^+ \times \mathbb{T}$
since the repelling fixed point $q_2$ of $h_2$ lies inside $[p_0,p_1]$.
This shows that $H$ is topologically mixing.

This reasoning also applies to small perturbations of $H$, where also
the fiber maps may depend on all of $\boldsymbol{\omega}$ instead of just
$\omega_0$. 
We note the following changes in the reasoning.
The inclusions \eqref{e:ifsinclusion} get replaced by
\begin{equation*}
\Sigma_2^+ \times E_{in} \subset H (\Sigma_2^+ \times E_{in}), \qquad
  H(\Sigma_2^+ \times E_{out}) \subset \Sigma_2^+ \times E_{out}
\end{equation*}
The map $H$ acting on $\Sigma_2^+ \times E_{out}$ acts by contractions in the fibers
$\boldsymbol{\omega} \times E_{out}$.
A high iterate  $H^n (U)$ may not contain a product $\Sigma_2^+ \times J$
but contains a strip of some width $\varepsilon$ lying between the graphs of two maps
$\Sigma_2^+ \to \mathbb{T}$. Again $H^{n+1}(U)$ contains two strips of total width
exceeding $c \varepsilon$ for some $c>1$, and 
$H^{n+k} (U)$ contain $2^k$ strips
of increasing total width. We conclude that 
there is an interval $[\tilde{p}_0,\tilde{p}_1]$ near $[p_0,p_1]$ so that 
for some $k>0$,  
$H^{n+k} (U)$ % covers
lies dense in 
$\Sigma_2^+ \times I$ for any $I \subset [\tilde{p}_0,\tilde{p}_{1}]$.

If $\boldsymbol{\omega}$ starts with a sequence of $i$ symbols $2$, then
$h_{\sigma^i \boldsymbol{\omega}}\circ \cdots\circ  h_{\boldsymbol{\omega}}$
maps an interval $I\subset \mathbb{T}$ that contains $q_2$ to an interval 
with length approaching 1 as $i\to\infty$.
Also, any point in $\mathbb{T}$ can be mapped into
$[\tilde{p}_0,\tilde{p}_1]$ by an iterate that involves $\boldsymbol{\omega}$ 
with a long sequence of symbols $3$.

The lemma follows.
\end{proof}

As a consequence, $F$ acting on $\mathbb{T}^2$ is topologically mixing. 
Indeed, take an open set $U$ in $\Sigma_n^+ \times \mathbb{T}$.
The construction in Lemma~\ref{l:ifs} gives that 
$\cup_{n\in \mathbb{N}} F^n(U)$ is open and dense in  
$\Sigma_n^+ \times \mathbb{T}$.
Now take open sets $U,V \subset \mathbb{T}^2$. As $g$ is expanding,
some iterate of $U$ under $F$ intersects  $\Sigma_n^+ \times \mathbb{T}$.
Again as $g$ is expanding, a higher iterate will intersect $V$, establishing
topological mixing of $F: \mathbb{T}^2 \to \mathbb{T}^2$.  

% As $F$ acting on $\mathbb{T}^2$ is topologically mixing, one checks that also 
% its natural extension acting on $\mathcal{S}\times \mathbb{T}$ is topologically mixing.

% \verb#add info#

It easily follows that also $F$ acting on $\mathcal{S}\times \mathbb{T}$ 
is topologically mixing.
Just note that $g^n(\ldots,y_{-1},y_0) = (\ldots,g^{n-1}(y_0),g^n(y_0))$ and
an open set in the product topology is 
of the form $\mathcal{S} \cap (\ldots U_{-2} \times U_{-1} \times U_0)$ with
$U_{-i} \subset \mathbb{T}$ open, and proper only for finitely many
values of $i$.

The first property in the list in the theorem follows 
as part of the above construction. 
The construction also implies that strong unstable manifolds are dense
in $\mathcal{S}\times \mathbb{T}$.
To see this, consider the periodic points $P$ and $Q$ for $F:\mathcal{S}\times \mathbb{T} \to \mathcal{S}\times \mathbb{T}$,
where $P$ corresponds to $p_0$ in the proof of the lemma and $Q$ corresponds to $q_2$.
The two-dimensional unstable manifold of $Q$ lies dense in
$\mathcal{S}\times \mathbb{T}$ since unstable manifolds for $g$ lie dense in $\mathcal{S}$.
Note that the stable manifold of $Q$ contains
points arbitrarily close to $P$.
We claim that $W^{u}(Q) \subset \overline{W^{uu}(P)}$
(compare \citep[Lemma~1.9]{MR1381990}):
take a point $x \in W^{u}(Q)$ and a neighborhood $V$ of it, iterate backwards and note that $F^{-m} (V)$ intersects $W^{uu}(P)$.
Thus $W^{uu}(P)$, and therefore each strong unstable manifold, lies dense in $\mathcal{S}\times \mathbb{T}$. 

% It follows that the natural extension acting on $\mathcal{S}\times \mathbb{T}$ is
% also topologically mixing.

Finally use these arguments
for inverse diffeomorphisms, making further small perturbations, to see that there are skew product maps for which also strong stable manifolds lie dense in $\mathcal{S}\times \mathbb{T}$.
\end{proof}

% \section{Strongly contractive forced circle diffeomorphisms}

\begin{Definition}
The skew-product map  $F$ is called strongly contractive if
for all $\varepsilon >0$, there are $\hat{y}\in \mathbb{T}$, an interval $V \subset {\mathbb S}^1$, $n \in {\mathbb N}$, so that $|V| > 1-\varepsilon$ and 
$ | f^n_{\hat{y}}(I) | < \varepsilon$. 
\end{Definition}

The following lemma, that provides a robust condition for $F$ being strongly contractive, is immediate. 

\begin{Lemma}\label{l:attr_fixed}
Suppose there exist $k\in\mathbb{N}$, $\hat{y}\in\mathbb{T}$,
with
$g^k(\hat{y}) = \hat{y}$ and $f^k_{\hat{y}}$ possessing a unique hyperbolic attracting and hyperbolic repelling fixed point.
Then $F$ is strongly contractive.
\end{Lemma}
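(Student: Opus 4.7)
The plan is to reduce the statement to elementary North-South dynamics for the single circle diffeomorphism $\phi := f^k_{\hat{y}}$. By hypothesis, $\phi$ has exactly two fixed points: a hyperbolic attractor $p$ and a hyperbolic repeller $q$. Since these are the only fixed points of a circle diffeomorphism, the forward dynamics of $\phi$ is of North-South type: every point of $\mathbb{T}\setminus\{q\}$ converges to $p$ under iteration of $\phi$, and this convergence is uniform on any compact subset of $\mathbb{T}\setminus\{q\}$.

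Before using this, I would verify the cocycle identity $f^{kN}_{\hat{y}} = (f^k_{\hat{y}})^N = \phi^N$ for every $N\ge 1$. This follows directly from the periodicity $g^k(\hat{y}) = \hat{y}$, since then $g^{jk+i}(\hat{y}) = g^i(\hat{y})$ for all $i,j\ge 0$, so unwinding the definition
\begin{align*}
f^{kN}_{\hat y} \;=\; f_{g^{kN-1}(\hat y)}\circ\cdots\circ f_{\hat y}
\end{align*}
and grouping the compositions in blocks of length $k$ gives exactly $N$ copies of $f^k_{\hat y}$.

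Now, given $\varepsilon>0$, choose an open interval $W\subset\mathbb{T}$ containing $q$ with $|W|<\varepsilon$, and set $V = \mathbb{T}\setminus W$. Then $V$ is a closed interval with $|V|>1-\varepsilon$, and $V$ is a compact subset of $\mathbb{T}\setminus\{q\}$. By the uniform convergence to $p$ on $V$, there exists $N\in\mathbb{N}$ with $\phi^N(V)$ contained in an interval of length less than $\varepsilon$ around $p$. Taking $n=kN$ and applying the cocycle identity gives $|f^n_{\hat{y}}(V)|<\varepsilon$, which is exactly the strong contractivity condition (reading the ``$I$'' in the Definition as the interval $V$).

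There is really no serious obstacle here: hyperbolicity of the two fixed points gives the uniform contraction toward $p$ on any compact set avoiding $q$, and the only thing to check carefully is the identification $f^{kN}_{\hat y} = \phi^N$, which is purely bookkeeping from the definition of the fiber cocycle together with $g^k(\hat y)=\hat y$.
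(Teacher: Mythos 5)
Your proof is correct; the paper states this lemma without proof (calling it ``immediate''), and your argument --- North--South dynamics of $\phi=f^k_{\hat y}$ away from the repeller, combined with the cocycle identity $f^{kN}_{\hat y}=(f^k_{\hat y})^N$ coming from $g^k(\hat y)=\hat y$ --- is exactly the intended justification. You also correctly resolve the typo in the Definition by reading the $I$ there as the interval $V$.
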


\section{Attracting invariant graphs}\label{s:aig}

Contraction of positive orbits starting in the same fiber, is explained by the following result. 
Theorem~\ref{t:main} follows from it.
The arguments that establish random fixed points
in iterated function systems, see \citep[Proposition 5.7]{MR2358052} (compare also \citep[Section~2.3]{navas}) 
and \citep{MR2425065}, 
are based on pushing forward a stationary measure by the circle diffeomorphisms
and identifying limit measures. 
Although there is no stationary measure in our context, our proof of Theorem~\ref{t:omega} is inspired
by this approach.
Different approaches using the theory of nonuniform hyperbolic systems to provide invariant delta-measures
in forced circle diffeomorphisms, are followed in 
\citep{lej87,cra90,MR1738057}.
Such approaches do not determine the number of points in each fiber and 
would therefore not allow to explain Theorem~\ref{t:main}.

\begin{Theorem}\label{t:omega}
Let $F: \mathbb{T}^2 \to \mathbb{T}^2$ 
be robust topologically mixing as in Theorem~\ref{t:robust_trans},
so that $F$ is also strongly contractive. % , and assume condition \eqref{e:dy>0} is met.
Then $F$ acting on $\mathcal{S} \times \mathbb{T}^2$
admits an invariant graph $\{ (\mathbf{y},\omega^+ (\mathbf{y})) \mid \mathbf{y} \in \mathcal{S} \}$
for a measurable function $\omega^+$, that attracts 
the positive orbits of $\nu \times \lambda$-almost all initial points.
% The graph of $\omega^+$ lies dense in $\mathcal{S} \times \mathbb{T}$.
\end{Theorem}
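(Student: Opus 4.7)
The plan is to construct the invariant graph as the locus of Dirac disintegrations of the invariant measure $\mu$ on $\mathcal{S}\times\mathbb{T}$ obtained from Lemma~\ref{l:furstenberg}. Both the Dirac property of $\mu_\mathbf{y}$ and the attraction statement will reduce to a single forward synchronization estimate: along the forward orbit of $\nu$-a.e.\ $\mathbf{y}$, the contracting block of Lemma~\ref{l:attr_fixed} is encountered with positive frequency, and its contraction overwhelms the bounded expansion of fiber maps between encounters. This adapts the pushforward-measure template for iterated function systems in \cite{MR2358052,MR2425065}, with topological mixing in the base replacing i.i.d.\ stationarity.

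\textbf{Setup.} By \cite[Lemma 2.3]{cra90} take an $F$-invariant $\mu^+$ on $\mathbb{T}^2$ with marginal $\nu^+$ and lift via Lemma~\ref{l:furstenberg} to $\mu$ on $\mathcal{S}\times\mathbb{T}$, with disintegrations $\mu_\mathbf{y}=\lim_n f^n_{y_{-n}}\mu^+_{y_{-n}}$. Once $\mu_\mathbf{y}=\delta_{\omega^+(\mathbf{y})}$ for $\nu$-a.e.\ $\mathbf{y}$, invariance of $\mu$ forces $f_\mathbf{y}(\omega^+(\mathbf{y}))=\omega^+(g\mathbf{y})$, so the graph of $\omega^+$ is $F$-invariant and $\omega^+$ is measurable as a pointwise limit of measurable assignments.

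\textbf{Forward synchronization.} Fix $\varepsilon>0$. Strong contractivity supplies $\hat{y}\in\mathbb{T}$, an interval $V\subset\mathbb{T}$ with $|V|>1-\varepsilon$, and $N\in\mathbb{N}$ with $|f^N_{\hat{y}}(V)|<\varepsilon$; uniform continuity extends this to $|f^N_y(V)|<2\varepsilon$ on a ball $B_\delta(\hat{y})$. Birkhoff applied to $(\mathbb{T},g,\nu^+)$ gives positive density of visits $\{k:g^k(y)\in B_\delta(\hat{y})\}$ for $\nu^+$-a.e.\ $y$. With $L=\max_{y,x}|f'_y(x)|$ the uniform Lipschitz constant of fiber maps, the contraction $(2\varepsilon)$ per visit beats the expansion $L^{\mathrm{gap}}$ between visits once $\varepsilon$ is small relative to $L$ and the visit density. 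A Borel--Cantelli argument over visits then yields that for $\lambda\otimes\lambda$-a.e.\ pair $(x_1,x_2)$, both orbits simultaneously land in $V$ at some visit time and emerge $2\varepsilon$-close; iterating through a countable exhaustion $\varepsilon\to 0$ gives $|f^n_\mathbf{y}(x_1)-f^n_\mathbf{y}(x_2)|\to 0$ for $\nu\times\lambda\times\lambda$-a.e.\ $(\mathbf{y},x_1,x_2)$.

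\textbf{Conclusion and main obstacle.} Forward synchronization yields (i) the Dirac property of $\mu_\mathbf{y}$, since the product measure $\mu_\mathbf{y}\otimes\mu_\mathbf{y}$ must then be supported on the synchronizing diagonal in the limit, and (ii) attraction of the graph for $\nu\times\lambda$-a.e.\ $(\mathbf{y},x)$, by comparing the orbits of $x$ and $\omega^+(\mathbf{y})$ under $F^n$. The hard part is the synchronization estimate itself: pairs of orbits benefit from the contracting block only when both simultaneously lie in $V$ at a visit time, so one must control the $\lambda\otimes\lambda$-measure of pairs that fail this condition across all visits. Making this rigorous requires iterating the Borel--Cantelli argument through a nested sequence of contracting blocks with decreasing $\varepsilon$, and relies on the uniform bi-Lipschitz regularity of fiber maps built into the perturbative construction of Theorem~\ref{t:robust_trans}.
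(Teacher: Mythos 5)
Your overall strategy (lift $\mu^+$ via Lemma~\ref{l:furstenberg}, show the disintegrations are Dirac, read off the graph) matches the paper's in outline, but the step you yourself flag as ``the hard part'' --- the synchronization estimate --- is exactly the content of the theorem, and the route you sketch for it does not go through. First, the strong contractivity block is an \emph{absolute} reset, not a multiplicative contraction: two points that both lie in $V$ at a visit time emerge within $\varepsilon$ of each other, but this bound does not improve at subsequent visits to the same block, so ``contraction per visit beats expansion between visits'' is not a meaningful comparison. To get distances tending to $0$ you must pass to a sequence $\varepsilon_k\to 0$ with blocks $(\hat y_k, V_k, N_k)$, and then you need \emph{both} fiber orbits to lie in $V_k$ at some visit time to $B_{\delta_k}(\hat y_k)$. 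There is no a priori control of $\lambda\bigl((f^n_y)^{-1}(V_k^c)\bigr)$: the compositions $f^n_y$ have unbounded distortion, and indeed if synchronization holds and $V_k^c$ happens to contain the attracting point, this preimage has measure close to $1$. The visit events are also not independent, so Borel--Cantelli in the form you invoke does not apply. This circularity (you need to know where the orbit mass concentrates to run the argument, but that is what you are trying to prove) is precisely why the stationary-measure arguments of \citep{MR2358052,MR2425065} do not transfer directly, as the paper notes.

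The paper circumvents this by proving the concentration statement in the natural extension rather than along forward orbits: Lemma~\ref{l:aecontr} shows that for $\nu$-a.e.\ $\mathbf{y}$ some large interval is contracted by $f^n_{g^{-n}(\mathbf{y})}$, via a combinatorial device (choosing $q>1/\varepsilon$ backward passages through the contracting block with \emph{disjoint} image intervals $a_1,\dots,a_q$ in a single fiber, so at least one image must have length $<\varepsilon$), which yields the exponential tail bound $\nu(\Delta_{tl})\le(1-\delta)^t$ without any independence assumption on forward visits. Two further ingredients you omit are then needed: (i) upgrading the resulting delta-measure \emph{accumulation points} of $f^n_{g^{-n}(\mathbf{y})}\lambda$ to actual limits, which the paper does by combining the martingale convergence of Lemma~\ref{l:furstenberg} with the full support, diffuseness and $u$-invariance of $\mu^+_{y_0}$ (Lemmas~\ref{l:suppmu+} and~\ref{l:todelta}, using Gibbs $u$-states); and (ii) the attraction property itself, which the paper obtains not from pairwise forward synchronization but by running the whole construction for $F^{-1}$ to produce a repelling graph $\omega^-$ with $\omega^-\ne\omega^+$ a.e., so that $\omega^+$ attracts everything off the graph of $\omega^-$. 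Your step from ``$\lambda\otimes\lambda$-a.e.\ pair synchronizes'' to ``$\mu_\mathbf{y}$ is Dirac'' would also need $\mu^+_y\otimes\mu^+_y$-a.e.\ synchronization, and $\mu^+_y$ need not be absolutely continuous with respect to $\lambda$.
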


\begin{proof}
The main steps in the proof are the following.
We show that for $\nu$-almost all $\mathbf{y}$, 
the push-forwards $f^n_{g^{-n} (\mathbf{y})} \lambda$ of Lebesgue measure $\lambda$ contain delta measures in the fiber over $\mathbf{y}$ 
as accumulation points in the weak star topology.
Invoking Lemma~\ref{l:furstenberg}, we establish that $f^n_{g^{-n} (\mathbf{y})} \lambda$ in fact converges to a delta measure, 
thus proving the existence of an invariant graph for $F$ acting on $\mathcal{S}\times\mathbb{T}$.
For the attraction properties, we must likewise consider $F^{-1}$ and construct an invariant graph for $F^{-1}$.

We start with a lemma.

\begin{Lemma}\label{l:aecontr}
Given $\varepsilon >0$, for $\nu$-almost all $\mathbf{y} \in \mathcal{S}$ there are an interval $I\subset \mathbb{T}$ with $|I|>1-\varepsilon$ and $n \in \mathbb{N}$, 
so that $|f^n_{g^{-n}(\mathbf{y})}(I)| < \varepsilon$.
\end{Lemma}

\begin{proof}
The lemma will be a consequence of a construction in which we 
provide $\delta >0$, $L \in {\mathbb N}$ so that the following holds.
Given an interval $J \subset \mathbb{T}$ and $n\in\mathbb{N}$, we construct for each $y \in J$, an interval $I \subset \mathbb{T}$ with
$|I|>1-\varepsilon$, an open subset $J' \subset \psi^{-1}(J) \subset \mathcal{S}$ with $\nu(J')/\nu^+(J) = \nu(J')/\nu(\psi^{-1}(J))>\delta$ and a positive integer $l\le L$, 
so that for $\mathbf{y} \in J'$, $|f^{n + l}_{g^{-n-l}(\mathbf{y})}( I)| < \varepsilon$. 
% (Alternatively, one can state this by replacing the interval $J$ with an open subset of $\mathcal{S}$, and then $J' \subset J$.)

Fix $\varepsilon>0$.

\noindent \emph{Step 1.}
There are intervals $K \subset \mathbb{T}$, $V \subset \mathbb{T}$
with $|V| > 1-\varepsilon$ and $N \in \mathbb{N}$, 
so that $|f^N_y (V)| < \varepsilon$ for $y \in K$. 
We make this more explicit.
By Theorem~\ref{t:robust_trans}, $f^k_{\hat{y}}$ has a hyperbolic attracting fixed point $p$. 

By taking suitable smooth coordinates near the forward orbit of $(\hat{y},p)$, 
we may assume that the local unstable manifold $W^{u}_\text{loc}(\hat{y},p)$ of $(\hat{y},p)$ is contained in $\mathbb{T} \times \{p\}$.

% We let $N$ be a multiple of $k$. 
The interval $K$ can be taken a fundamental domain (i.e. an interval for which $g^k$ maps one boundary point to the other), so that $g^{i}(K)$
stays close to the forward orbit of $\hat{y}$ for $0\le i \le N$.
We let $N$ be a multiple of $k$, so that $g^N(K)$ is close to $\hat{y}$.
By replacing $K$ with $g^{-ki} (K)$ for some $i>0$,
and replacing $N$ by $N + ki$, we can decrease the size of the image 
$f^N_y (V)$, while keeping $g^N(K)$ fixed.
Write $U_y = f^N_y (V)$ and also
$V_z = f^N_y (V)$ with $z = g^N(y)$.

\noindent \emph{Step 2.}
Take $q > 1/\varepsilon$. Take $\zeta_0 = y_{-n}$ in $J$. %Let $y_0 = g^n (y_{-n})$. 
Write $\hat{\mathbf{y}}$ for the periodic point of $g: \mathcal{S}\to\mathcal{S}$ in $\psi^{-1}(\hat{y})$.
Iterates of an interval $N \subset W^{uu}_\mathrm{loc}(\hat{\mathbf{y}},p)$ 
lie dense in $\mathcal{S}\times \mathbb{T}$, by Theorem~\ref{t:robust_trans} and expansion properties of $g$. 
One can therefore take a point $\boldsymbol{\zeta} = (\ldots,\zeta_0) \in \mathcal{S}$ so that 
% $(\mathbf{y},a_0) = (\ldots,y_0,a_0) \in W^{uu}()$ so that 
\begin{enumerate}
 \item there are positive integers $M_1 < \ldots < M_q$ so that 
$\zeta_{-M_i} \in g^N(K)$, $1 \le i \le q$,
% \item $a_{M_q} = F^{-M_q} (\boldsymbol{\zeta},a_0) \in W^{uu}_\mathrm{loc}(\hat{\mathbf{y}},p)$,
% \item $f^{M_i}_{\zeta_{-M_q}} (a_{M_q})$, $1 \le i \le q$, are disjoint.
% $a_i = f^{-M_i}_\mathbf{y}(a_0)$, $1 \le i \le q$, are disjoint,
\item with $a_i, b_i$ given by 
$(\zeta_{-M_i},b_i) \in  W^{uu}_\mathrm{loc} (\hat{\mathbf{y}},p)$
and $(\zeta_{-M_1} , a_i) = F^{M_i-M_1} (\zeta_{-M_i},b_i)$,
the points $a_i$, $1 \le i \le q$, are disjoint.
% the points $f^{M_i}_{\zeta_{-M_q}} (a_{i})$, $1 \le i \le q$, are disjoint.
%
% \item with $(\zeta_{-M_q} , a_q) \in  W^{uu}_\mathrm{loc} (\hat{\mathbf{y}},p)$,
% the points $(\zeta_{a_i \in F^{M_i-M_1} ( W^{uu}_\mathrm{loc} (\hat{\mathbf{y}},p))$
% are so so that $f^{M_i}_{\zeta_{-M_q}} (a_{i})$, $1 \le i \le q$, are disjoint.
\end{enumerate}

\noindent \emph{Step 3.}
Take neighborhoods $L_i \subset g^N(K)$ of $\zeta_{-M_i}$ so that $f^{M_i}_{z} (V_z)$, $z \in L_i$,  are disjoint for different $i$.
Consider $\cap_{1 \le i \le q} g^{M_i} (L_i)$. Since finitely many such intervals (for varying $y_0$, $J$) cover the
circle $\mathbb{T}$, the numbers $N, M_i$ are bounded (that is, depend only on $\varepsilon$ and the dynamical system $F$).

\noindent \emph{Step 4.}
Let $L = \cup_{1\le i\le q} L_i \subset g^N(K)$.
Let $O = \cup_{1\le i\le q} g^{M_i}(L_i)$.
For $y \in J \cap O$, there is $j$, $1 \le j \le q$, with
$f^n_y (V_z)$ with $z = g^{-M_j} (y) \cap L_j$ that has length smaller than $\varepsilon$ (since there are $q > 1/\varepsilon$ such disjoint intervals).
This defines a set of $y$ values for which one of 
$f^{N+M_j+n}_y (U_y)$ is small.

For given $\varepsilon >0$, there is a bound $\delta>0$ with $|\cup_{1\le i \le q} L_i|/|J| > \delta$. 
A similar bound holds with $\nu^+$ replacing the length of intervals, since $\nu^+$
has density that is bounded and bounded away from zero. 
This ends the construction.
Now define 
\[\Delta_N = \{ \mathbf{y}\in \mathcal{S} \mid \text{for each interval } I \text{ with } |I| > 1 -\varepsilon \text{ and each } i \le N,\; |f^i_{g^{-i}(\mathbf{y})} (I)| > \varepsilon\}.\]
The above construction yields the estimate $\nu(\Delta_{tl}) \le (1-\delta)^t$.
Thus $\nu(\Delta) = 0$, where
\[\Delta  = \{ \mathbf{y}\in \mathcal{S} \mid \text{for each interval } I \text{ with } |I| > 1 -\varepsilon \text{ and each } i,\; |f^i_{g^{-i}(\mathbf{y})} (I)| > \varepsilon\}.\tag*{\qedhere}\]
\end{proof}

Lemma~\ref{l:aecontr} implies that the push-forwards
$f_{g^{-n}(\mathbf{y})}^n \lambda$ contain a delta-measure $\delta_{\omega^+ (\mathbf{y})}$,
concentrated at $\omega^+ (\mathbf{y})$, as accumulation point.
This yields an invariant graph
$\{ (\mathbf{y},\omega^+ (\mathbf{y})) \mid \mathbf{y} \in \mathcal{S} \}$ 
for $F: \mathcal{S}\times \mathbb{T} \to \mathcal{S}\times \mathbb{T}$.
Let $\mu^+_{y_0}$ be obtained from 
$\delta_{\omega^+ (\mathbf{y})}$, $\mathbf{y} =  (\ldots,y_0)$,
as in \eqref{e:from+}.
The following lemma will be applied to find that $f_{g^{-n}(\mathbf{y})}^n \lambda$
and $f_{g^{-n}(\mathbf{y})}^n \mu^+_{y_{-n}}$ 
converge to the delta measure $\delta_{\omega^+ (\mathbf{y})}$ for $\nu$-almost all $\mathbf{y} \in \mathcal{S}$.
% \verb#mu^+ = int delta; ergodic#
We refer to \citep{tsujii,tsujii-acta} for general results on 
invariant measures for partially hyperbolic endomorphisms.
Recall that a measure is diffuse if it has no atoms.

\begin{Lemma}\label{l:suppmu+}
For each $y_0 \in \mathbb{T}$, 
 \begin{align}\label{e:suppmu+}
  \mathrm{supp}\; \mu^+_{y_0} = \mathbb{T}.
 \end{align}
Moreover, $\mu^+_{y_0}$ is diffuse and depends continuously 
on $y_0$ in the weak star topology. 
\end{Lemma}

\begin{proof}
We use an estimate $m = \max_{y,x} \{ f'_y (x) , 1/f'_y(x)\} < \min_{y} g'(y) = M$,
which is implicit in Theorem~\ref{t:robust_trans}.
Consider $\mathbf{z}$ close to $\mathbf{y}$; i.e. $z_i$ close to $y_i$ for all $i \in -\mathbb{N}$.  
The branch of $g$ defined near $y_{-i-1}$ for which $g(y_{-i-1}) = g_{-i}$ has an inverse;
in the following we write $g^{-1}$ for it with the understanding that we consider
orbits near $\mathbf{y}$. 
Consider $f^{n}_{g^{-n}(y)} (x) =  f_{g^{-1}(y)} \circ \ldots \circ f_{g^{-n} (y)} (x)$ and compute
\begin{align*}
 \frac{\partial}{\partial y}  f^{n}_{g^{-n}(y)} (x) 
% &=
% f'_{g^{-1}(y)} (f^{n-1}_{g^{-n}(y)} (x)) \frac{\partial}{\partial y} f^{n-1}_{g^{-n}(y)} (x)
% + \frac{\partial}{\partial y}f'_{g^{-1}(y)} (f^{n-1}_{g^{-n}(y)} (x)) (g^{-1})' (y)
% \\
% &=
% (f^{n-1}_{g^{-n}(y)})' (x) \frac{\partial}{\partial y}f_{g^{-n} (y)} (x) (g^{-n})' (y)
% +  \frac{\partial}{\partial y}  f^{n-1}_{g^{-n}(y)} ( f_{g^{-n} (y)}  x) 
% \\
&= \sum_{i=1}^n 
\left( f^{i-1}_{g^{-i+1}(\mathbf{y})}  \right)' ( f^{n-i+1}_{g^{-n}(\mathbf{y})}(x) )
\frac{\partial}{\partial y}  f_{g^{-i}(\mathbf{y})} ( f^{n-i}_{g^{-n}(\mathbf{y})} (x)) 
\left( g^{-i} \right)' (\mathbf{y}),
% \\
% &\sim  1
\end{align*}
which is uniformly bounded by \eqref{e:parthyp}.
Likewise
\begin{align*}
 \frac{\partial}{\partial y}  f^{n-l}_{g^{-n}(y)} (x) M^{l} &= \mathcal{O} (1).
\end{align*}

Now, for a subsequence $n_i \to \infty$, $f^{n_i}_{y_{-n_i}} \lambda$ converges in the weak star topology 
to a delta measure $\delta_{\omega^+ (\mathbf{y})}$.
Take $l$ so that $g^{-l} (\mathbb{T})$ is an interval of length 
$\mathcal{O} (\varepsilon^s)$ for a positive $s$. 
Recall that $f^n_{g^{-n}(\mathbf{y})}$ maps an interval $V$ of length
$1 - \varepsilon$ to an interval $I$ of length $\varepsilon$.  
Then
$f^{-l}_\mathbf{y} (I)$ is an interval of length $\varepsilon^t$ for some $t >0$.  
These estimates imply that for $\mathbf{z}$ near $\mathbf{y}$, $f^{n_i}_{g^{-n_i}(z)}\lambda$ 
converges to a delta measure $\delta_{\omega^+(\mathbf{z})}$ depending continuously on $\mathbf{z}$. 
% \verb#comment: delta(X(z)) should be W^uu (y)#
The graph transform construction of the strong unstable lamination in fact shows 
that $(\mathbf{z},\omega^+(\mathbf{z}))$ is in 
$W^{uu} ( \mathbf{y},\omega^+(\mathbf{y}))$. See also \citep{MR721733} and \citep[Chapter~11]{bondiavia05}.

Consider local center stable manifolds
$W^{ss,c}(y) = \{((\ldots,y_{-1},y_0),x) \in \mathcal{S}\times \mathbb{T}, \; y_0=y  \}$
in $\mathcal{S}\times\mathbb{T}$.
The invariant measure $\mu$ has disintegrations $\mu_y$ along
$W^{ss,c} (y)$, $y \in\mathbb{T}$. 
If $\pi^{ss}$ denotes the projection onto the fiber $\mathbb{T}$,
\begin{align*}
\pi^{ss} ((\ldots,y_{-1},y_0),x) &= (y_0,x),
\end{align*}
then 
\begin{align}\label{e:mu+mu}
\mu^+_{y_0} &= \pi^{ss}\mu_{y_0}.
\end{align}
We claim that the disintegrations $\mu_{y_0}$ are $u$-invariant, 
meaning that the disintegrations
$\mu_{y_0}$ are invariant under the holonomy along strong unstable leaves.

Consider $F$ acting on $\Sigma \times \mathbb{T}^2$ (compare Section~\ref{s:ne}) and 
take coordinates in which the strong unstable lamination is affine.
Take a product measure $m = \nu_2\times\nu$.
A C\'esaro accumulation point of push-forwards $F^n m$ 
is a Gibbs $u$-measure \citep{MR721733,MR2366230}, which is unique \cite{MR1749677}.
The C\'esaro accumulation point is a product measure and hence 
$u$-invariant (see also  \cite[Remark~4.1]{avivia10}).

Equation \eqref{e:suppmu+} follows by \eqref{e:from+} since strong unstable manifolds 
are dense in $\mathcal{S}\times \mathbb{T}$.
If an open set has positive measure, also the image under $F$ has positive measure.
Since the measure $\mu$ is invariant and the strong unstable lamination is minimal, 
with \eqref{e:mu+mu} this yields
\eqref{e:suppmu+}.
Continuous dependence of $\mu^+_{y_0}$ on $y_0$ is 
implied by \eqref{e:mu+mu} and $u$-invariance of $\mu_{y_0}$.
Since $\mu_\mathbf{y}$ is ergodic, the measure $\mu^+_{y_0}$ is ergodic. 
In view of \eqref{e:suppmu+} it is therefore diffuse.  
 \end{proof}

\begin{Lemma}\label{l:todelta}
For $\nu$-almost all $\mathbf{y} \in \mathcal{S}$,
\[
 \lim_{n \to \infty}  f_{g^{-n}(\mathbf{y})}^n \mu^+_{g^{-n}(\mathbf{y})} = 
 \lim_{n \to \infty} f_{g^{-n}(\mathbf{y})}^n \lambda =
\delta_{\omega^+ (\mathbf{y})}
\]
for a delta measure $\delta_{\omega^+ (\mathbf{y})}$.
\end{Lemma}

\begin{proof}
Recall that $f_{g^{-n}(\mathbf{y})}^n \lambda$
has a delta measure $\delta_{\omega^+ (\mathbf{y})}$ as accumulation point.
Further, $f_{g^{-n}(\mathbf{y})}^n \mu^+_{g^{-n}(\mathbf{y})}$
converges by Lemma~\ref{l:furstenberg}. 
By Lemma~\ref{l:suppmu+},  $f_{g^{-n}(\mathbf{y})}^n \lambda$ and
$f_{g^{-n}(\mathbf{y})}^n \mu^+_{g^{-n}(\mathbf{y})}$ converge to $\delta_{\omega^+ (\mathbf{y})}$.
\end{proof}

We have constructed an invariant graph
$\{ (\mathbf{y},\omega^+(\mathbf{y})\}$ for $F: \mathcal{S}\times \mathbb{T} \to \mathcal{S}\times \mathbb{T}$.
To prove its attraction property, we need to consider iterates from time zero to time $n>0$.
Invertibility of the maps in the fibers implies that if $f^n_{\mathbf{y}} \lambda$ is close to a delta measure,
then also $f^{-n}_{g^n(\mathbf{y})} \lambda$ is close to a delta measure.
So we can also consider iterates from
time $n>0$ to time 0, for which we consider the inverse skew product map $F^{-1}$.
One can largely follow the previous reasoning to construct
an invariant graph $\{ (\mathbf{y},\omega^-(\mathbf{y})\}$ for $F^{-1}: \mathcal{S}\times \mathbb{T} \to \mathcal{S}\times \mathbb{T}$.

We give the lemma's that correspond to Lemma's~\ref{l:suppmu+} and \ref{l:todelta}.
Recall the last part of Section~\ref{s:ne} on ergodic properties of $F^{-1}$.

\begin{Lemma}\label{l:suppzeta-}
For each $\boldsymbol{\omega} \in \Sigma$, 
 \begin{align}\label{e:suppzeta1-}
  \mathrm{supp}\; \zeta^-_{\boldsymbol{\omega}} = \mathbb{T}.
 \end{align}
Moreover,  $\zeta^-_{\boldsymbol{\omega}}$ is diffuse and depends continuously on $\boldsymbol{\omega}$ in the weak star topology.
\end{Lemma}

\begin{Lemma}\label{l:todelta-}
For $\nu$-almost all $\mathbf{y} \in \mathcal{S}$,
\[
 \lim_{n \to \infty}  k_{g^{n}(\mathbf{y})}^{-n} \zeta^-_{g^{n}(\mathbf{y})} = 
 \lim_{n \to \infty} f_{g^{n}(\mathbf{y})}^{-n} \lambda =
\delta_{\omega^- (\mathbf{y})}
\]
for a delta measure $\delta_{\omega^- (\mathbf{y})}$.
\end{Lemma}

% Using this lemma the existence of an invariant graph 
% $\{ (\mathbf{y},\omega^- (\mathbf{y})) \mid \mathbf{y} \in \mathcal{S} \}$ 
% for $F^{-1}:\mathcal{S} \times \mathbb{T}\to \mathcal{S}\times \mathbb{T}$ follows as before
% (with Lemma~\ref{l:furstenberg-} replacing Lemma~\ref{l:furstenberg}).
Lemma~\ref{l:todelta-} implies that the graph of 
$\omega^+$, whose existence is given by Lemma~\ref{l:todelta}, is attracting.
It attracts all points lying outside the graph of $\omega^-$.
This is true even if $\omega^+ = \omega^-$, but in fact 
$\omega^+(\mathbf{y}) \neq \omega^-(\mathbf{y})$ for $\nu$-almost
all $\mathbf{y}$.
This can be seen by writing $\mathcal{S} = \Sigma \times I/\sim$ as in \eqref{e:SI},
so that in $\mathbf{y} = \boldsymbol{\omega} \times y$ the ``past'' $\boldsymbol{\omega}$ 
and the ``future'' $y$ are independent. 
The resulting positions $\omega^+(\mathbf{y})$ and 
$\omega^-(\mathbf{y})$ depend on past and future only, respectively,  
and vary according to Lemma~\ref{l:todelta} and Lemma~\ref{l:todelta-}.

This finishes the proof of Theorem~\ref{t:omega}. 
\end{proof}

% \cite{1086.37026}

% \begin{small}
% \bibliographystyle{plain}
%  \bibliography{ergodic.bib}

\begin{thebibliography}{10}

% \bibitem{MR1757000}
% J.~F. Alves, C. Bonatti, and M. Viana.
% \newblock S{RB} measures for partially hyperbolic systems whose central
%   direction is mostly expanding.
% \newblock {\em Invent. Math.}, 140(2):351--398, 2000.

\bibitem{ant84}
V.~A. Antonov.
\newblock Modeling of processes of cyclic evolution type. synchronization by a
  random signal.
\newblock {\em Vestnik Leningrad. Univ. Mat. Mekh. Astronom.}, 2:67--76, 1984.

\bibitem{arn98}
L.~Arnold.
\newblock {\em Random dynamical systems}.
\newblock Springer Monographs in Mathematics. Springer-Verlag, Berlin, 1998.

\bibitem{avivia10}
A.~Avila and M. Viana.
\newblock Extremal Lyapunov exponents: an invariance principle and applications.
\newblock {\em Invent. Math.} 181:115--174 (2010). 

\bibitem{MR1913567}
S.~Boccaletti, J.~Kurths, G.~Osipov, D.~L. Valladares, and C.~S. Zhou.
\newblock The synchronization of chaotic systems.
\newblock {\em Phys. Rep.}, 366(1-2):1--101, 2002.

\bibitem{MR1381990}
C.~Bonatti and L.~J. D{\'i}az.
\newblock Persistent nonhyperbolic transitive diffeomorphisms.
\newblock {\em Ann. of Math. (2)}, 143(2):357--396, 1996.

\bibitem{bondiagor}
C. Bonatti, L.~J. D{\'{\i}}az, and A.~S. Gorodetski{\u\i}. 
\newblock Non-hyperbolic ergodic measures with large support.
\newblock {\em Nonlinearity}, 23(3):687--706, 2010. 

\bibitem{bondiavia05}
C.~Bonatti, L.~J. D{\'{\i}}az, and M.~Viana.
\newblock {\em Dynamics beyond uniform hyperbolicity}, volume 102 of {\em
  Encyclopaedia of Mathematical Sciences}.
\newblock Springer-Verlag, Berlin, 2005.
% \newblock A global geometric and probabilistic perspective, Mathematical Physics, III.

\bibitem{MR1749677}
C.~Bonatti and M.~Viana.
\newblock S{RB} measures for partially hyperbolic systems whose central
  direction is mostly contracting.
\newblock {\em Israel J. Math.}, 115:157--193, 2000.

\bibitem{MR2366230}
K.~Burns, D.~Dolgopyat, Ya. Pesin, and M.~Pollicott.
\newblock Stable ergodicity for partially hyperbolic attractors with negative
   central exponents.
\newblock {\em J. Mod. Dyn.}, 2(1):63--81, 2008.

% \bibitem{MR832433}
% I.~P. Cornfeld, S.~V. Fomin, and Ya.~G. Sina{\u\i}.
% \newblock {\em Ergodic theory}, volume 245 of {\em Grundlehren der
%   Mathematischen Wissenschaften [Fundamental Principles of Mathematical
%   Sciences]}.
% \newblock Springer-Verlag, New York, 1982.
% % \newblock Translated from the Russian by A. B. Sosinski{\u\i}.

\bibitem{cra90}
H.~Crauel.
\newblock Extremal exponents of random dynamical systems do not vanish.
\newblock {\em J. Dynam. Differential Equations}, 2:245--291, 1990.

\bibitem{MR2358052}
B.~Deroin, V.~A.~Kleptsyn, and A.~Navas.
\newblock Sur la dynamique unidimensionnelle en r{\'e}gularit{\'e}
  interm{\'e}diaire.
\newblock {\em Acta Math.}, 199(2):199--262, 2007.

\bibitem{MR2545014}
L.~J. D{\'{\i}}az and A.~S. Gorodetski{\u\i}.
\newblock Non-hyperbolic ergodic measures for non-hyperbolic homoclinic classes.
\newblock {\em Ergodic Theory Dynam. Systems} 29(5):1479--1513, 2009.

% \bibitem{MR0117523}
% N.~Dunford and J.~T. Schwartz.
% \newblock {\em Linear {O}perators. {I}. {G}eneral {T}heory}.
% \newblock With the assistance of W. G. Bade and R. G. Bartle. Pure and Applied
%   Mathematics, Vol. 7. Interscience Publishers, Inc., New York, 1958.

\bibitem{ifs}
G.~H.~Ghane, A.~J.~Homburg, and A.~Sarizadeh.
\newblock ${C}^1$ robustly minimal iterated function systems.
% \newblock Preprint, 2009.
\newblock {\em Stoch. Dyn.}, 10(1):155--160, 2010.

% \bibitem{1002.37017}
% A.~S. Gorodetski{\u\i} and Yu.~S. Il{\cprime}yashenko.
% \newblock {Certain properties of skew products over a horseshoe and a
%   solenoid.}
% \newblock {Grigorchuk, R. I. (ed.), Dynamical systems, automata, and infinite
%   groups. Transl. from the Russian. Moscow: MAIK Nauka/Interperiodica
%   Publishing, Proc. Steklov Inst. Math. 231, 90-112 (2000); translation from
%   Tr. Mat. Inst. Steklova 231, 96-118 (2000).}, 2000.

% \bibitem{MR1876932}
% E.~Ghys.
% \newblock Groups acting on the circle.
% \newblock {\em Enseign. Math. (2)}, 47(3-4):329--407, 2001.

\bibitem{1002.37017}
A.~S. Gorodetski{\u\i} and Yu.~S. Il{\cprime}yashenko.
\newblock {Certain properties of skew products over a horseshoe and a
  solenoid.}
\newblock {Grigorchuk, R. I. (ed.), Dynamical systems, automata, and infinite
  groups. Proc. Steklov Inst. Math. 231, 90-112 (2000).}

\bibitem{MR2132437}
A.~S. Gorodetski{\u\i}, Yu.~S. Il{\cprime}yashenko, V.~A. Kleptsyn, and M.~B. Nal{\cprime}ski{\u\i}.
\newblock Nonremovability of zero {L}yapunov exponents.
\newblock {\em Funct. Anal. Appl.}, 39(1):21--30, 2005.

% \bibitem{MR2186241}
% B.~Hasselblatt and Ya.~B.~Pesin.
% \newblock Partially hyperbolic dynamical systems.
% \newblock In {\em Handbook of dynamical systems. {V}ol. 1{B}}, pages 1--55.
%   Elsevier B. V., Amsterdam, 2006.

\bibitem{MR0501173}
M.~W. Hirsch, C.~C. Pugh, and M.~Shub.
\newblock {\em Invariant manifolds}.
\newblock Lecture Notes in Mathematics, Vol. 583. Springer-Verlag, Berlin,
  1977.

\bibitem{hom}
A.~J. Homburg.
\newblock Atomic disintegrations for partially hyperbolic diffeomorphisms.
\newblock Preprint available on \verb#http://www.science.uva.nl/~alejan/publications.html#

% \bibitem{MR2308142}
% T.~H. J{\"a}ger.
% \newblock On the structure of strange non-chaotic attractors in pinched skew
%   products.
% \newblock {\em Ergodic Theory Dynam. Systems}, 27(2):493--510, 2007.

% \bibitem{MR2241977}
% T.~H. J{\"a}ger and J.~Stark.
% \newblock Towards a classification for quasiperiodically forced circle
%   homeomorphisms.
% \newblock {\em J. London Math. Soc. (2)}, 73(3):727--744, 2006.

\bibitem{MR2504850}
T.~H. J{\"a}ger.
\newblock Strange non-chaotic attractors in quasiperiodically forced circle maps.
\newblock {\em Comm. Math. Phys.}, 289(1):253--289, 2009.


\bibitem{kathas97}
A.~Katok and B.~Hasselblatt.
\newblock {\em {Introduction to the modern theory of dynamical systems. With a
  supplement by Anatole Katok and Leonardo Mendoza.}}
\newblock {Encyclopedia of Mathematics and Its Applications. 54. Cambridge:
  Cambridge University Press. xviii, 802 p.}, 1997.

\bibitem{1086.37026}
V.A. Kleptsyn and M.B. Nalskii.
\newblock {Contraction of orbits in random dynamical systems on the circle.}
\newblock {\em Funct. Anal. Appl.}, 38(4):267--282, 2004.

\bibitem{lej87}
Y.~{Le Jan}.
\newblock Equilibre statistique pour les produits de diff{\'e}omorphismes
  al{\'e}atoires ind{\'e}pendants.
\newblock {\em Ann. Inst. H. Poincar{\'e} Probab. Statist.}, 23:111--120, 1987.

\bibitem{MR889254}
R.~Ma{\~n}{\'e}.
\newblock {\em Ergodic theory and differentiable dynamics}, volume~8 of {\em
  Ergebnisse der Mathematik und ihrer Grenzgebiete (3) [Results in Mathematics
  and Related Areas (3)]}.
\newblock Springer-Verlag, Berlin, 1987.
% \newblock Translated from the Portuguese by Silvio Levy.

% \bibitem{MR1797749}
% G.~Margulis.
% \newblock Free subgroups of the homeomorphism group of the circle.
% \newblock {\em C. R. Acad. Sci. Paris S{\'e}r. I Math.}, 331(9):669--674, 2000.

\bibitem{navas}
A.~Navas.
\newblock {\em Groups of circle diffeomorphisms}.
\newblock The university of Chicago press, 2011.

\bibitem{pt}
J.~Palis and F.~Takens.
\newblock {\em Hyperbolicity \& sensitive chaotic dynamics at homoclinic
bifurcations}
\newblock Cambridge University Press, 1993.

\bibitem{MR721733}
Ya.~B. Pesin and Ya.~G. Sina{\u\i}.
\newblock Gibbs measures for partially hyperbolic attractors.
\newblock {\em Ergodic Theory Dynam. Systems}, 2(3-4):417--438 (1983), 1982.

\bibitem{MR2068774}
Ya.~B. Pesin.
\newblock {\em Lectures on partial hyperbolicity and stable ergodicity}.
\newblock Zurich Lectures in Advanced Mathematics. European Mathematical
  Society (EMS), Z{\"u}rich, 2004.

\bibitem{MR1432307}
C. Pugh, M. Shub and A. Wilkinson.
\newblock H\"older foliations.
\newblock {\em Duke Math. J.} 86(3):517--546 (1997).
% 
% \bibitem{MR1788044}
% C. Pugh, M. Shub and A. Wilkinson.
\newblock Correction to: ``H\"older foliations'' [Duke Math. J. 86 (1997), no. 3, 517--546]
\newblock {\em Duke Math. J.}, 105(1):105--106 (2000).

% \bibitem{0154.15703}
% V.~A. Rokhlin.
% \newblock {Exact endomorphisms of a Lebesgue space.}
% \newblock {\em Am. Math. Soc., Transl., II. Ser.}, 39:1--36, 1964.

\bibitem{MR1972184}
D.~Ruelle.
\newblock Perturbation theory for {L}yapunov exponents of a toral map:
  extension of a result of {S}hub and {W}ilkinson.
\newblock {\em Israel J. Math.}, 134:345--361, 2003.

\bibitem{MR1838747}
D.~Ruelle and A.~Wilkinson.
\newblock Absolutely singular dynamical foliations.
\newblock {\em Comm. Math. Phys.}, 219(3):481--487, 2001.

\bibitem{shub}
M.~Shub.
\newblock Endomorphisms of compact differentiable manifolds. 
\newblock {\em Amer. J. Math.}, 91(1):175--199, 1969.

\bibitem{MR1738057}
M.~Shub and A.~Wilkinson.
\newblock Pathological foliations and removable zero exponents.
\newblock {\em Invent. Math.}, 139(3):495--508, 2000.

% \bibitem{MR1605989}
% J.~Stark.
% \newblock Invariant graphs for forced systems.
% \newblock {\em Phys. D}, 109(1-2):163--179, 1997.
% % \newblock Physics and dynamics between chaos, order, and noise (Berlin, 1996).

\bibitem{tsujii}
M.~Tsujii.
\newblock Fat solenoidal attractors.
\newblock {\em Nonlinearity}, 14(5):1011--1027, 2001.

\bibitem{tsujii-acta}
M.~Tsujii.
\newblock Physical measures for partially hyperbolic surface endomorphisms.
\newblock {\em Acta Math.}, 194(1): 37--132, 2005.

% \bibitem{viayan10}
% M.~Viana and J. Yang.
% \newblock Physical measures and absolute continuity for one-dimensional center direction.
% \newblock preprint, 2010.

\bibitem{MR0348794}
R.~F. Williams.
\newblock Expanding attractors.
\newblock {\em Inst. Hautes {\'E}tudes Sci. Publ. Math.}, (43):169--203, 1974.

\bibitem{MR2425065}
H.~Zmarrou and A.~J. Homburg.
\newblock Dynamics and bifurcations of random circle diffeomorphisms.
\newblock {\em Discrete Contin. Dyn. Syst. Ser. B}, 10(2-3):719--731, 2008.

\end{thebibliography}
% \end{small}

\def\cprime{$'$}

\end{document}